\DeclareMathAlphabet{\mathpzc}{OT1}{pzc}{m}{it}
\newcommand{\dd}[1]{\operatorname{d}\!#1}
\newcommand{\ee}[1]{\operatorname{e}^{#1}}
\newcommand{\1}{\mathds 1}            
\newcommand{\rr}{\mathds R}
\newcommand{\rrd}{\mathds{R}^d}
\newcommand{\nn}{\mathds N}
\newcommand{\cc}{\mathds C}
\newcommand{\ccd}{\mathds{C}^d}
\newcommand{\OA}{\mathcal{A\,}}
\newcommand{\OG}{\mathcal{G\,}}
\newcommand{\ghut}{\hat{g}}
\newcommand{\uh}{\hat{u}}
\newcommand{\uhut}{\hat{u}}
\newcommand{\vh}{\hat{v}}
\newcommand{\vhut}{\hat{v}}
\newcommand{\muhut}{\hat{\mu}}
\newcommand{\skl}{\langle}
\newcommand{\skr}{\rangle}
\DeclareMathOperator{\loc}{loc}
\newcommand{\Rkri}{\overset{\kern+.5ex\circ}{R}\vphantom{l}}
\newcommand{\QQkri}{\overset{\kern+.5ex\circ}{Q}\vphantom{l}}
\newfont{\schoen }{callig15 at 13.5pt}
\newcommand{\zitep}[1]{\relax}
\newcommand{\notiz}[1]{{\protect\makebox[0cm][l]{$\!^\Join$}}\marginpar{\tiny {$\Join$} #1}}
\renewcommand{\notiz}[1]{\relax}
\theoremstyle{plain}
\newtheorem{theorem}{Theorem}[section]
\newtheorem{lemma}[theorem]{Lemma}
\newtheorem{proposition}[theorem]{Proposition}
\newtheorem{defin}[theorem]{Definition}
 \newtheorem{remark}[theorem]{Remark}
 \newtheorem{example}[theorem]{Example}
\begin{document}


\title[Sobolev index: A classification of L\'evy processes]{Sobolev index: A classification of L\'evy processes \\via their symbols}
\author[K. Glau]{Kathrin Glau}

\address{Technische Universit\"at M\"unchen, Parkring 11
85748 Garching b. M\"unchen
}
\email{kathrin.glau@tum.de}

\keywords{L\'evy processes, PIDEs, weak solutions, parabolic evolution equation, Sobolev-Slobodeckii-spaces, pseudo differential operator, option pricing}

\subjclass[2000]{60G51, 60-08, 35S10}

\thanks{Parts of the results presented in this article were derived in the Ph.D. thesis \cite{PhdGlau}. The author expresses her gratitude to Ernst Eberlein for his valuable support as advisor of the Ph.D. thesis and moreover for his various useful advices as her academic teacher.
The author further thanks Mathias Beiglboeck, Carsten Eilks and Antonis Papapantoleon for critical reading and for fruitful comments on the manuscript. Financial support of the DFG through project \mbox{EB66/11-1} is gratefully acknowledged.}

\date{\today}
\maketitle

\setcounter{tocdepth}{1}

\frenchspacing
\pagestyle{myheadings}

\begin{abstract}

We classify L\'evy processes according to the solution spaces of the associated parabolic PIDEs. This classification reveals structural characteristics of the processes and is relevant for applications such as for solving PIDEs numerically for pricing options in L\'evy models.

The classification is done via the Fourier transform i.e. via the symbol of the process. We define the Sobolev index of a L\'evy process by a certain growth condition on the symbol. It follows that for L\'evy processes with Sobolev index $\alpha$ the corresponding evolution problem has a unique weak solution in the Sobolev-Slobodeckii space $H^{\alpha/2}$.
We show that this classification applies to a wide range of processes. Examples are the Brownian motion with or without drift, generalised hyperbolic (GH), CGMY and (semi) stable L\'evy processes.

A comparison of the Sobolev index with the Blumenthal-Getoor index sheds light on the structural implication of the classification. More precisely, we discuss the Sobolev index as an indicator of the smoothness of the distribution and of the variation of the paths of the process. This highlights the relation between the $p$-variation of the paths and the degree of smoothing effect that stems from the distribution.
\end{abstract}

\section{Introduction}

\noindent The Feynman-Kac formula provides a fundamental link between conditional expectations and solutions to PDEs.
Under suitable regularity assumptions, a Feynman-Kac representation relates certain conditional expectations to weak solutions of parabolic equations. In financial mathematics this fact is used to compute option prices by solving parabolic equations.

 In the context of L\'evy processes, conditional expectations are linked to solutions of Partial Integro Differential Equations (PIDEs).
In \cite{MatachePetersdorffSchwab2004}, \cite{MatacheSchwabWihler2005}, \cite{MatacheNitscheSchwab2005} wavelet-Galerkin methods for pricing European and American options have been developed.  The methods have been extended to multivariate models, see \cite{ReichSchwabWinter2008}, \cite{phdWinter} and the references therein.
Also standard finite element methods are efficiently used for pricing basket options, even in high dimensional models using dimension reduction techniques, see \cite{hepperger2010} and \cite{hepperger2011c}. \cite{Achdou2008} provides a calibration procedure of a L\'evy model based on PIDEs.
Essential for those finite element methods is the existence and uniqueness of a weak solution of the PIDE, related to the underlying process, in a certain Sobolev-Slobodeckii space.

In other words, a relation between L\'evy processes and Sobolev-Slobodeckii spaces is seminal. More precisely, L\'evy processes of a certain type are linked to Sobolev-Slobodeckii spaces $H^s$ with a certain index $s>0$. This index is important since it classifies the nature of the related evolution problems, resp. of its weak solutions.
It turns out that if the symbol of the L\'evy process satisfies certain polynomial growth conditions with degree $2s$, then the evolution problem has a weak solution in the space $H^s$.  The structural connection between certain types of L\'evy processes and Sobolev-Slobodeckii spaces is thus reflected by the index $s$. This leads us to the definition the \emph{Sobolev index} of the L\'evy process.

It is worth mentioning that in the classical theory on weak solutions of evolution problems, existence and uniqueness of a weak solution are related to the so-called G{\aa}rding and continuity inequalities of the bilinear form. The bilinear form is given via the operator of the equation. While PIDEs are classified via their operators, L\'evy processes are determined by their characteristic functions due to the famous L\'evy-Khintchine formula. Various classes of L\'evy processes, as e.g. the CGMY processes, are directly defined by specifying their characteristic functions. The symbol of a L\'evy process is given via the exponent of the Fourier transform of the process, resp. in terms of the cumulant generating function, see e.g. \cite{Jacob.I}. Therefore properties of the symbol can be canonically derived for a wide range of L\'evy processes.

Crucial for connecting both approaches is Parseval's equality that allows to express the bilinear form associated to the infinitesimal generator via the symbol; details are provided in Section 2, where the notation and this connection is formally shown. In Section 3 the argument is outlined in detail.

For various classes of L\'evy processes we compute the Sobolev index in Section 4. The Brownian motion with and without drift has Sobolev index $2$. We show that the generalised hyperbolic (GH) processes, Cauchy processes, Student-$t$ processes, and the multivariate NIG processes have Sobolev index $1$. The Sobolev index is additionally discussed for CGMY processes, and for L\'evy processes without continuous martingale part which have an absolutely continuous L\'evy measure.

The symbol of a generic $\alpha$-stable L\'evy process is of the form $c|u|^\alpha$ with a positive constant $c$, hence it is polynomial and the Sobolev index can be deduced in an obvious way. In Section 5, we will shed light on the Sobolev index for the wider class of $\alpha$-semi-stable L\'evy processes.

The last section is dedicated to the examination of the Sobolev index in connection to the Blumenthal-Getoor index. For L\'evy processes that have a Sobolev index smaller than $2$, we derive that the Blumenthal-Getoor index is bigger or equal to the Sobolev index. Thereby a link is established between path properties of the process and the smoothing effect of the related evolution problem. Moreover, in view of the Feynman-Kac representation a link to the smoothing effect of the distribution is provided.

\section{The infinitesimal generator and the symbol of a L\'evy process}
\noindent Let $L$ be a L\'evy process with values in $\rrd$ and characteristics $(b,\sigma,F)$ with respect to a truncation function $h$. Here, a measurable function $h:\rr^d\to\rr$ is called a truncation function if $h(x)=x$ in a neighbourhood of $0$.

The distribution  of the process is uniquely determined by the distribution $\mu_t:=P^{L_t}$ for any (for some) $t>0$ and hence by the characteristic function $\muhut_t$ of $L_t$,
\begin{align}\label{eq-charlevy}
\muhut_t(\xi)=E\ee{i \skl \xi, L_t \skr} = \ee{ t \theta(i\xi)}\,.
\end{align}
with cumulant generating function
\begin{align}\label{eq-charexpolevy}
\theta(i\xi) = - \frac{1}{2}\langle \xi, \sigma \xi\rangle + i\langle \xi,b \rangle 
+ \int\left(\ee{i\langle \xi,y\rangle} -1- i\langle \xi,h(y)\rangle\right)\,F (\dd y)\,,
\end{align}
where we denote by $\langle \cdot, \cdot\rangle$ the Euclidean scalar product in $\rr^d$. The matrix $\sigma$ is a symmetric, positive semidefinite $d\times d$-matrix, $b\in \rr^d$ and $F$ is a L\'evy measure i.e. a Borel measure on $\rrd$ with $\int (|x|^2\wedge 1) F(\dd x)<\infty$.
\\
Furthermore we denote by $\OG$ the infinitesimal generator of the process $L$, i.e.
\begin{eqnarray}\label{levygenerator}
\OG f(x)&=&\frac{1}{2}\sum_{j,k=1}^d \sigma^{j,k}\frac{\partial^2 f}{\partial x_j\partial x_k}(x)+\sum_{j=1}^d b^j\frac{\partial f}{\partial x_j}(x)\\
\nonumber
&&\,+\int_{\rr^d}\Big( f(x+y)-f(x)-\sum_{j=1}^d\frac{\partial f}{\partial x_j}(x) \, \big( h(y)\big)_j\Big)F(\dd y)
\end{eqnarray}
for $f\in C^2_0(\rr ^d)$.
We define
$$
\OA:=-\OG\,.
$$
The \emph{symbol $A$ of the process $L$} is defined by
\begin{eqnarray*}
A(\xi) &:=& \frac{1}{2}\langle \xi,\sigma\,\xi\rangle + i\langle \xi,b\rangle 
- \int\left(\ee{-i\langle \xi,y\rangle} -1+ i\langle \xi,h(y)\rangle\right)\,F(\dd y) \\
&=& -\theta(-i\xi) \,,
\end{eqnarray*}
compare e.g. \cite{Jacob.I}.
We have
\begin{equation}\label{mhut=eA}
\muhut_t(\xi) = E \ee{i \langle \xi, L_t \rangle} = \ee{- t A(-\xi) }  \,.
\end{equation}
Let us further denote by $S(\rrd)$ the Schwartz space  i.e. the set of smooth functions $\varphi\in C^\infty(\rr^d,\cc)$
with 
$$
(1+|x|^m) |D^\alpha \varphi(x)| \rightarrow 0\,,|x|\rightarrow \infty
$$
for every multi index $\alpha = (\alpha_1,\ldots,\alpha_d) \in\nn_0^d$ and every $m\in \nn_0$, where $D^\alpha$ denotes the multiple partial derivative
$$
D^\alpha\varphi(x) := \frac{\partial^{\alpha_1}\cdots\partial^{\alpha_d}} {\partial x_1^{\alpha_1} \cdots\partial x_d^{\alpha_d}} \varphi (x)\,.
$$

Let us sketch a relation between the Fourier transform of the distribution, the symbol of the process and a partial integro differential equation.
Let $u\in S(\rr^d)$ and
$$
T_t u(x) := E_x \big( u(L_t)\big) := E\big( u(L_t+x)\big) 
$$
If the absolute value of the characteristic function $\muhut:\rr^d\rightarrow \cc$ is bounded by a polynomial, then Parseval's equality yields
$$
T_t u (x)  = \frac{1}{(2\pi)^d} \int \ee{-i\langle \xi,x\rangle } \muhut_t(-\xi) \uhut(\xi) \dd \xi \,.
$$
In particular, $(T_t)_{t\ge 0}$ is a family of pseudo differential operators $T_t$ with symbol $\muhut_t(-\cdot)$. Changing the order of integration  and differentiation, we obtain
$$
\OG u (x) =  \lim_{t\rightarrow 0}  \frac{T_t u - u}{t}(x) = \partial_t \big(T_tu(x)\big)\big|_{t=0}
=  \frac{1}{(2\pi)^d} \int \ee{-i\langle x,\xi \rangle } A(\xi) \uhut(\xi) \dd \xi \,,
$$
where $\uhut$ denotes the Fourier transform of $u$.
Hence the infinitesimal generator $\OG$, which satisfies
$$
\OG u =  \lim_{t\rightarrow 0}  \frac{T_t u - u}{t}\,,
$$
compare e.g. \citeN{Dynkin1965} and \citeN[Chapter 4]{Jacob.I} is a pseudo differential operator with symbol $-A$ resp.
$$
\OA u (x) = - \OG u(x) =   \frac{1}{(2\pi)^d} \int \ee{-i\langle x,\xi \rangle } A(\xi) \uhut(\xi) \dd \xi \,.
$$

Let us first notice that the symbol $A$ of a L\'evy process is a Borel measurable function $A:\rrd\to \cc$ and
there exists a positive constant $C>0$ such that
\begin{equation}\label{A<=pol2}
|A(\xi)| \le C\big(1+ |\xi|\big)^2\qquad(\text{for all }\xi \in \rrd)\,,
\end{equation}
which is well-known and standard to verify\notiz{\cite[Satz I.7 and Bemerkung I.8 d)]{PhdGlau}}. According to the notation in  \cite{Eskin}, we say that $A\in S^0_2$. More generally, we write $A\in S^0_\alpha$, if $|A(\xi)| \le C\big(1 + |\xi|\big)^\alpha$ for a certain $\alpha\in\rr$ and a constant $C\ge0$.

\notiz{
It is standard to verify the following remark.
\begin{rem}\label{alpha-levy -lel2}
If $A$ is the symbol of a L\'evy process, then there exists a positive constant $C>0$ such that
$$
|A(\xi)| \le C\big(1+ |\xi|\big)^2\qquad\text{for all }\xi \in \rrd\,.
$$
\end{rem}
\begin{proof}
It is obviously sufficient to show the assertion for the integral part of $A$. Furthermore we can choose the truncation function to be $h(y):=y\1_{\{|y|<1\}}$.
With the triangle inequality we decompose the integral
$$
\left| \int\left(\ee{-i\langle \xi,y\rangle} -1+ i\langle \xi,h(y)\rangle\right)\,F(\dd y) \right|
$$
in three parts, an integral over $\rrd\setminus(-1,1)$, a second one over the set $[-1,1]^d\setminus (-\epsilon,\epsilon)^d$ and the last one over $(-\epsilon,\epsilon)^d$. By \citeN[Lemma 8.6]{Sato} there is a function $\theta=\theta(\xi,y)$ in $\cc$ with $|\theta|\le 1$ such that
$$
\ee{-i\skl \xi,y \skr} = 1 -  i \skl \xi,y \skr + \theta(\xi,y) \frac{|\skl \xi,y \skr|^2}{2}\,.
$$
It follows that
\begin{eqnarray*}
 \int_{(-\epsilon,\epsilon)^d} \Big|\ee{-i\skl \xi,y \skr} - 1 + i \skl \xi,y \skr \Big| F(\dd y) 
&=&
\int_{(-\epsilon,\epsilon)^d}\Big|\theta(\xi,y) \frac{|\skl \xi,y \skr|^2}{2}\Big| F(\dd y)
\le
\frac{|\xi|^2}{2}\int_{(-\epsilon,\epsilon)^d} |y|^2 F(\dd y)\,.
\end{eqnarray*}
For the second term we have
\begin{eqnarray*}
 \bigg| \int_{[-1,1]^d\setminus(-\epsilon,\epsilon)^d}\left(\ee{-i\langle \xi,y\rangle} -1 + i\skl \xi,y\skr \right)\,F(\dd y) \bigg| 
\le
2 F\big([-1,1]^d\setminus(-\epsilon,\epsilon)^d\big) + |\xi|\int_{[-1,1]^d\setminus (-\epsilon,\epsilon)^d}|y| F(\dd y)\,,
\end{eqnarray*}
and for the last term we immediately obtain
\begin{eqnarray*}
 \bigg| \int_{\rrd\setminus[-1,1]^d}\left(\ee{-i\langle \xi,y\rangle} -1\right)\,F(\dd y) \bigg|
\le
2 F\big(\rrd\setminus [-1,1]^d\big)\,.
\end{eqnarray*}
To summarize, depending on $\epsilon$ we get constants $C_1'(\epsilon),\,C_2'(\epsilon),\,C_3'(\epsilon) >0$ and hence a positive constant $C>0$ such that
\begin{eqnarray*}
\left| \int\left(\ee{-i\langle \xi,y\rangle} -1+ i\langle \xi,h(y)\rangle\right)\,F(\dd y) \right|
&\le&
C_1'(\epsilon) |\xi|^2 + C_2'(\epsilon)|\xi| + C_3(\epsilon) \\
&\le&
C\big(1+|\xi|\big)^2 \,.
\end{eqnarray*}
\end{proof}
}

Let $g\in S(\rr^d)$ and
$$
v(t,x) := E\big( g(L_T)\,\big|\,L_t=x\big)\,,
$$
then
\label{Arg-PIDE=Ewert}
\begin{equation}\label{gl-faltformel}
v(t,x)
= E\big( g(L_{T-t}\!+\!x)\big)
=
\frac{1}{(2\pi)^d} \int \! \ee{-i\langle \xi,x\rangle} \muhut_{T-t}(-\xi)\ghut(\xi)\dd\xi
\end{equation}
and hence $\vhut(t,\xi) = \ee{(T-t) A(\xi)} \ghut(\xi)$.
On the other hand we have
\begin{eqnarray*}
 \partial_t v(t,x)
=
\partial_t \big(T_{T-t} g(x) \big)
&=&
\frac{1}{(2\pi)^d}  \int \ee{-i\langle x,\xi\rangle} \left( \partial_s\ee{(T-s)A(\xi)}\big|_{s=t}\right) \ghut(\xi) \dd \xi \\
&=&
\frac{1}{(2\pi)^d}  \int \ee{-i\langle x,\xi\rangle} A(\xi) \vhut(t,\xi) \dd\xi \\
&=&
-\OG v(t,x) \,.
\end{eqnarray*}
In other words, the function $v$ satisfies the PIDE
\begin{eqnarray*}
 \partial_t v(t,x) + \OG v (t,x) &=& 0 \qquad\text{ for all }(t,x)\in(0,T)\times\rr^d \, \\
v(T,x) &=& g(x)\quad\text{for all }x\in\rr^d\,.
\end{eqnarray*}
For $V(t,x):=v(T-t,x) = E\big( g(L_T)\,\big|\,L_{T-t}=x\big)$ we accordingly have
\begin{eqnarray}\label{gl-musterparab}
 \partial_t V(t,x) + \OA V (t,x) &=& 0 \qquad\text{ for all }(t,x)\in(0,T)\times\rr^d \, \\
V(0,x) &=& g(x)\quad\text{for all }x\in\rr^d\,.
\end{eqnarray}
In this case the function $v$ solves the PIDE in the classical sense i.e. point wise.
Beyond that, in cases where a point wise solution may fail to exist, a Feynman-Kac formula ties together weak solutions of certain PIDEs and conditional expectations, see \cite {BensoussanLions}.

\section{Definition of the Sobolev index}

\noindent According to inequality \eqref{A<=pol2}, the symbol $A$ belongs to $S^0_2$, we have $A \uhut \in L^2(\rrd)\cap L^1(\rrd)$ for every function $u \in S(\rrd)$ and the Fourier inverse of $A \uhut$ is well defined. Moreover an elementary calculation shows that
\begin{align}\label{def-pdo}
\frac{1}{(2\pi)^d} \int A(\xi) \uhut(\xi) \ee{ - i \skl x, \xi \skr} \dd \xi  = \OA u(x) \qquad \text{for all }x\in \rrd \,\,\text{and all } u \in S(\rrd) \,.
\end{align}
\zitep{[page 18]PhdGlau}Equation \eqref{def-pdo} coincides with the definition of a \emph{pseudo differential operator $\OA$ with symbol $A\in S^0_2$}. In other words, we have checked that $A$ is indeed the symbol of the so called pseudo differential operator (PDO) $\OA$.
\begin{remark}
Let $L$ be a L\'evy process with infinitesimal generator $\OG\!$. Since the PDO $\OA = -\OG$ is real-valued the associated symbol $A$ satisfies
$$
A(\xi) = \overline{A(-\xi)}\qquad \text{for all }\xi\in\rrd\,.
$$
\end{remark}
\zitep{ for a detailed proof see \cite[p. 206]{PhdGlau}.}
In the sequel we will work with \emph{Sobolev-Slobodeckii spaces}. These are defined by
$$
H^s(\rr^d) = \big\{u\in S'(\rr^d)\,\big|\, \hat{u}\in L^1_{\loc}(\rr^d,\ccd) \quad\hbox{with } \|u\|_s^2<\infty \big\}
$$
for $s\in\rr$ with
$$
\|u\|_s^2 = \int \left|\hat{u}(\xi) \right|^2 \big(1+|\xi| \big)^{2s} \dd \xi \,,
$$
where $S'(\rrd)$ denotes the space of \emph{generalised functions} i.e. the dual space of the Schwartz space $S(\rrd)$.
\par
The following assertion is taken from \cite[Lemma 4.4]{Eskin}. To keep our presentation self contained we include the short but crucial proof.
\begin{lemma}\label{st-p.o.}
Let $A\in S^0_\alpha$ with PDO $\OA$. Then there exists a constant $C\ge 0$, such that
$$
\|\OA u\|_{s-\alpha} \le C\|u\|_s \quad\hbox{ for all } u\in S(\rrd)
$$
for every $s\in \rr$. Furthermore the operator
$
\OA: S(\rr^d) \rightarrow  C^\infty(\rrd,\cc)
$
has a unique linear and continuous extension
$$
\OA: H^s(\rr^d) \rightarrow  H^{s-\alpha}(\rr^d) \,.
$$
\end{lemma}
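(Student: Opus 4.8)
The plan is to exploit that, on the Fourier side, $\OA$ acts simply as multiplication by the symbol $A$. Indeed, the defining identity \eqref{def-pdo} exhibits $\OA u$ as the inverse Fourier transform of $A\uhut$, so that $\widehat{\OA u}(\xi)=A(\xi)\uhut(\xi)$ for every $u\in S(\rrd)$. Since $\uhut$ decays faster than any polynomial while $|A(\xi)|\le C(1+|\xi|)^\alpha$, the product $A\uhut$ lies in $L^1(\rrd)\cap L^2(\rrd)$; in particular $A\uhut\in L^1_{\loc}(\rrd)$, which is exactly the condition needed for $\OA u$ to be a legitimate element of $H^{s-\alpha}(\rrd)$.

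The norm estimate is then a one-line computation. Inserting $\widehat{\OA u}=A\uhut$ into the definition of the Sobolev-Slobodeckii norm and using the polynomial bound on $A$, I would estimate
\begin{align*}
\|\OA u\|_{s-\alpha}^2 &= \int \big|A(\xi)\uhut(\xi)\big|^2 (1+|\xi|)^{2(s-\alpha)}\dd\xi \\
&\le C^2 \int (1+|\xi|)^{2\alpha}\,|\uhut(\xi)|^2 (1+|\xi|)^{2(s-\alpha)}\dd\xi
= C^2\,\|u\|_s^2\,.
\end{align*}
The decisive point is that the factor $(1+|\xi|)^{2\alpha}$ supplied by the symbol bound cancels exactly against the shift from $s-\alpha$ back to $s$, so the weights recombine to $(1+|\xi|)^{2s}$; this is what makes the estimate valid for \emph{every} $s\in\rr$ with the same constant $C$ as in the definition of $S^0_\alpha$.

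For the extension I would invoke the density of $S(\rrd)$ in $H^s(\rrd)$: given the boundedness just established, the standard extension-by-continuity argument produces a unique linear continuous map $\OA:H^s(\rrd)\to H^{s-\alpha}(\rrd)$ as the $H^{s-\alpha}$-limit $\OA u=\lim_n\OA u_n$ along any sequence $u_n\in S(\rrd)$ with $u_n\to u$ in $H^s$. Uniqueness is automatic, since two continuous extensions must agree on the dense subset $S(\rrd)$.

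The computations are elementary, and the only ingredient carrying genuine content rather than bookkeeping is the density of $S(\rrd)$ in $H^s(\rrd)$ for arbitrary real $s$, which underpins uniqueness of the extension. I expect this to be the one step worth care. It can in fact be bypassed entirely by defining the extension directly through the multiplier relation $\widehat{\OA u}:=A\uhut$ for $u\in H^s(\rrd)$: then $A\uhut\in L^1_{\loc}(\rrd)$ as above, the same computation gives $\OA u\in H^{s-\alpha}(\rrd)$ with the identical bound, continuity is immediate from linearity of the estimate, and agreement with the PDO on $S(\rrd)$ holds by construction. This makes the multiplier map manifestly the sought continuous extension.
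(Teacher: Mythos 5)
Your proposal is correct and follows essentially the same route as the paper's proof: the identity $\widehat{\OA u}=A\uhut$ from \eqref{def-pdo} combined with the one-line weight-cancellation estimate, and then extension by continuity via the density of $S(\rrd)$ in $H^s(\rrd)$. The extra observations (that $A\uhut\in L^1\cap L^2$, and the alternative direct multiplier definition of the extension) are harmless refinements of the same argument rather than a different approach.
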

\begin{proof}
From the definition of the norm and since $A\in S^0_\alpha$, we conclude
\begin{eqnarray*}
\|\OA u\|_{s-\alpha}^2 
&=& 
\int (1+|\xi|)^{2(s-\alpha)}|A(\xi)\hat{u}(\xi)|^2 \dd \xi\\
&\le&
 C \int (1+|\xi|)^{2s}|\hat{u}(\xi)|^2 \dd \xi\\
&=& C \|u\|^2_s \,.
\end{eqnarray*}
\noindent Obviously $\OA u \in C^\infty$  holds for every $u\in S(\rrd)$ and since $S(\rrd)$ is dense in $H^s(\rr^d)$ there exists a unique linear and continuous extension $\OA: H^s(\rr^d) \rightarrow  H^{s-\alpha}(\rr^d)$.
\end{proof}

\noindent For each $s\in\rr$ the dual space $(H^s(\rrd))^\ast$ of the Sobolev-Slobodeckii space $H^s(\rrd)$ is  isomorphic to $H^{-s}(\rrd)$, compare \cite[S. 62, 63]{Eskin}. Together with Lemma \ref{st-p.o.} this leads to
\begin{proposition}\label{st-sybol-Hs}
If $\OA$ is a PDO with symbol $A\in S^0_\alpha$, then
$$
\OA : H^s(\rrd) \longrightarrow \big(H^s(\rrd)\big)^*
$$
is continuous for $s = \alpha/2$ and the associated bilinear form $a:H^s(\rrd) \times H^s(\rrd) \to \cc$ defined by
$$
a(u,v) := (\OA u)(v)
$$
is continuous on $H^s(\rrd)$ i.e. there exists a constant $c>0$ with
\begin{eqnarray*}
 \big| a(u,v) \big| \le c\|u\|_s \|v\|_s\qquad \text{for all } u,v\in H^s(\rrd)\,.
\end{eqnarray*}
\end{proposition}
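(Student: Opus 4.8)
The plan is to exploit the special relation $s-\alpha=-s$, which holds precisely when $s=\alpha/2$. With this choice, Lemma~\ref{st-p.o.} already delivers the continuity of $\OA\colon H^s(\rrd)\to H^{s-\alpha}(\rrd)=H^{-s}(\rrd)$, and since $\big(H^s(\rrd)\big)^\ast$ is isomorphic to $H^{-s}(\rrd)$, this is exactly the first assertion. It then remains to identify the bilinear form $a(u,v)=(\OA u)(v)$ with a concrete duality pairing and to read off the continuity estimate from the Cauchy--Schwarz inequality.

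First I would make the duality pairing explicit. The isomorphism $H^{-s}(\rrd)\cong\big(H^s(\rrd)\big)^\ast$ is realised (up to the Fourier normalisation of the ambient convention) by the continuous extension of the $L^2$-scalar product: for $f\in H^{-s}(\rrd)$ and $v\in H^s(\rrd)$ the associated functional acts by
$$
f(v)=\int \hat f(\xi)\,\overline{\hat v(\xi)}\,\dd\xi\,.
$$
This integral is absolutely convergent because one may factor the integrand as $\big((1+|\xi|)^{-s}\hat f(\xi)\big)\big((1+|\xi|)^{s}\overline{\hat v(\xi)}\big)$ and apply Cauchy--Schwarz, which moreover yields $|f(v)|\le\|f\|_{-s}\,\|v\|_s$. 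Thus the weights $(1+|\xi|)^{\pm s}$ distribute symmetrically onto the two factors, each producing one of the two Sobolev norms.

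It then suffices to insert $f=\OA u$. For $u,v\in H^s(\rrd)$ we obtain
$$
\big|a(u,v)\big|=\big|(\OA u)(v)\big|\le \|\OA u\|_{-s}\,\|v\|_s\,,
$$
and applying Lemma~\ref{st-p.o.} with the exponent $s-\alpha=-s$ gives $\|\OA u\|_{-s}\le C\|u\|_s$. Combining the two estimates yields $|a(u,v)|\le C\|u\|_s\|v\|_s$, so the claim holds with $c=C$; continuity of $\OA\colon H^s(\rrd)\to\big(H^s(\rrd)\big)^\ast$ then follows as well by taking the supremum over $v$ in the unit ball of $H^s(\rrd)$.

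The genuinely routine parts are the density of $S(\rrd)$ (already used to define the extension in Lemma~\ref{st-p.o.}) and the convergence of the pairing integral. The one point demanding care is consistency of conventions: one must verify that the realisation of the isomorphism $\big(H^s\big)^\ast\cong H^{-s}$ used to phrase the first assertion is the same $L^2$-type pairing entering the definition $a(u,v)=(\OA u)(v)$, so that the abstract continuity statement and the concrete Cauchy--Schwarz bound refer to one and the same bilinear form. Once the pairing is fixed, any normalisation constants from the Fourier transform enter both factors symmetrically and do not affect the structure of the estimate.
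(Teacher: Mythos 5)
Your proposal is correct and takes essentially the same route as the paper, which obtains the proposition precisely by applying Lemma \ref{st-p.o.} with $s=\alpha/2$ (so that $s-\alpha=-s$) together with the isomorphism $\big(H^s(\rrd)\big)^\ast\cong H^{-s}(\rrd)$ cited from Eskin. Your explicit realisation of the duality as the Fourier-side pairing with weights $(1+|\xi|)^{\pm s}$ and Cauchy--Schwarz is the intended one, as the paper itself confirms immediately afterwards in equation \eqref{bilform_via_symbol}.
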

\noindent Let us now observe that for a PDO $\OA$ with symbol $A\in S^0_\alpha$ and bilinear form $a$ we have 
\begin{equation}\label{bilform_via_symbol}
a(u,v) = \int (\OA u)(x) \overline{v(x)} \dd x = \int A(\xi)   \uhut(\xi) \overline{\vhut(\xi)} \dd\xi
\end{equation}
for every $u,\,v\in S(\rrd)$ by Parseval's identity.
Since the bilinear form is expressed in terms of the symbol, the coercivity and the G{\aa}rding inequality translate to properties of the symbol. We will study coercivity and G{\aa}rding inequality with respect to Sobolev-Slobodeckii spaces.
\par
Let $A\in S^0_{\alpha}$ and assume the existence of a positive constant $c_1$ with
\begin{eqnarray}\label{coerc-cond}
  \Re(A(\xi))
 \ge
 c_1(1+ |\xi|)^{\alpha}\qquad \text{ for all }\xi\in \rrd \,.
 \end{eqnarray}
Then for any $u\in S(\rrd)$
\begin{eqnarray*}
\Re\big( a(u,u) \big)
=
\int \Re\big(A(\xi)\big) |\uh(\xi)|^2 \dd \xi 
\ge
c_1 \int (1+|\xi|)^{\alpha} |\uh(\xi)|^2 \dd \xi
= 
c_1 \|u\|_{\alpha/2}^2 \,.
\end{eqnarray*}
With the density of $S(\rrd)$ in $H^{\alpha/2}(\rrd)$, the coercivity of the bilinear form $a$ with respect to the Hilbert space $H^{\alpha/2}(\rrd)$ follows. Hence, if the symbol $A\in S_\alpha^0$ of a L\'evy process $L$ satisfies the coercivity condition \eqref{coerc-cond}, the infinitesimal operator of $L$  is elliptic. Moreover the corresponding parabolic equation has a unique solution in the Sobolev-Slobodeckii space $H^{\alpha/2}(\rrd)$, which will be discussed in detail in Theorem \ref{exeind-anhandsymbol}.

Let us point out that in contrast to the usual assumptions on a symbol, compare e.g. estimate (B.2) in \cite{Jacob.III}, we do not require any order of differentiability of the symbol. It is well known that the natural domain of the pseudo differential operator $\OA$ is the 
 $\psi$-Bessel potential space
$$
H^{\psi,2}_p(\rrd) = \bigg\{u\in L^2(\rrd)\,\bigg| \int_{\rrd} \big(1+ \psi(\xi)\big)^2|\uhut(\xi)|^2\dd \xi <\infty\bigg\}
$$
for $\psi(\xi):=\theta(i\xi)=-A(-\xi)$, that is studied in detail in \cite{FarkasJacobSchilling2001}. We are equally interested in the ellipticity of the operator, hence we investigate also the G{\aa}rding inequality.

Notice that the real part of the symbol of a L\'evy process is nonnegative,
\begin{align}\label{reAge0}
\Re\big(A(\xi)\big)
=
\langle \xi, \sigma \xi\rangle - \int \big( \cos(\langle x, \xi\rangle)-1\big)F(\dd y)
\ge0.
\end{align}
It is straightforward to verify that the space $H^{\Re(A)}:=\overline{C_0^\infty(\rrd)^{\|\cdot\|_{\Re(A)}}}$, that is the completion of $C_0^\infty(\rrd,\rr)$ with respect to the norm $\|\cdot\|_{\Re(A)}$ given by
\begin{align*}
\|u\|_{\Re(A)}:= \int_{\rrd} \big(1+\Re\big(A(\xi)\big)\big)|\uhut(\xi)|^2\dd\xi,
\end{align*}
is a Hilbert space. Moreover, $H^{\Re(A)}\hookrightarrow L^2(\rrd)\hookrightarrow\big(H^{\Re(A)}\big)^\ast$ is a Gelfand triplet, where $\big(H^{\Re(A)}\big)^\ast$ denotes the dual space of $H^{\Re(A)}$.
For $u\in C_0^\infty(\rrd,\rr)$ it follows
\begin{align*}
a(u,u) = \int_{\rrd} A(\xi) |\uhut(\xi)|^2\dd \xi
 =
 \int_{\rrd} \Re\big(A(\xi)\big) |\uhut(\xi)|^2\dd \xi 
=
\|u\|^2_{\Re(A)} - \|u\|^2_{L^2}.
\end{align*}
If we assume
\begin{align}\label{assumeImle1+Re}
\big|\Im\big(A(\xi)\big)\big|
\le c\big(1+\Re\big(A(\xi)\big)\big)\qquad\text{for all }\xi\in\rrd
\end{align}
with some positive constant $c\ge0$, then we obtain for $u,v\in C_0^\infty(\rrd,\rr)$
\begin{eqnarray*}
 |a(u,v)|
&=&
\left| \int_{\rrd} A(\xi) \uh(\xi) \overline{\vh(\xi)} \dd \xi \right| \\
&\le&
\left| \int_{\rrd} \Re\big(A(\xi) \big)\uh(\xi) \overline{\vh(\xi)} \dd \xi \right| 
+ \left| \int_{\rrd} \Im\big(A(\xi) \big)\uh(\xi) \overline{\vh(\xi)} \dd \xi \right| \\
&\le&
(1+c) \int_{\rrd} \left|\big(1+\Re\big(A(\xi) \big) \big)\right| \left|\uh(\xi) \overline{\vh(\xi)}\right| \dd \xi \\
&\le&
(1+c)\|u\|_{\Re(A)} \|v\|_{\Re(A)}.
\end{eqnarray*}
From the classical result on existence and uniqueness of solutions of parabolic differential equations, compare e.g. \cite{Wloka-english}, we obtain the following result.
\begin{theorem}\label{abstracttheorem}
Let $A$ be the symbol of the L\'evy process $L$. Assume \eqref{assumeImle1+Re}.
Then, the bilinear form $a$ is continuous w.r.t. $H^{\Re(A)}$ and  satisfies a G{\aa}rding inequality w.r.t. $H^{\Re(A)},L^2(\rr^d)$. In particular, the PIDE 
\begin{eqnarray*}
 \dot{u} + \OA u &=& f \\
u(0) &=& g
\end{eqnarray*}
with $f\in L^2\big(0,T; \big( H^{\Re(A)}(\rr^d)\big)^* \big)$ and initial condition $g\in L^2(\rr^d)$ has a unique solution $u\in W^1\big( 0,T; H^{\Re(A)},L^2(\rr^d)\big)$.
\end{theorem}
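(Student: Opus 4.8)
The plan is to reduce the statement to the classical solution theory for abstract parabolic problems on a Gelfand triple, by checking that the bilinear form $a$ is continuous on $H^{\Re(A)}$ and satisfies a G{\aa}rding inequality with respect to $H^{\Re(A)}, L^2(\rrd)$. In fact both properties have essentially been verified in the two computations immediately preceding the statement, so the work consists in extending those identities from $C_0^\infty(\rrd,\rr)$ to the completion $H^{\Re(A)}$ and matching the resulting estimates to the hypotheses of the cited theorem.

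First I would record the continuity. For $u,v\in C_0^\infty(\rrd,\rr)$ the estimate derived from assumption \eqref{assumeImle1+Re} reads $|a(u,v)|\le(1+c)\|u\|_{\Re(A)}\|v\|_{\Re(A)}$; since $C_0^\infty(\rrd,\rr)$ is by definition dense in $H^{\Re(A)}$, this extends $a$ uniquely to a continuous bilinear form on $H^{\Re(A)}\times H^{\Re(A)}$. At the same stage I would verify that $a$ is genuinely real-valued on the real Hilbert space $H^{\Re(A)}$: using the symmetry $A(\xi)=\overline{A(-\xi)}$ noted above together with $\hat u(-\xi)=\overline{\hat u(\xi)}$ for real $u$, the substitution $\xi\mapsto-\xi$ in \eqref{bilform_via_symbol} gives $\overline{a(u,v)}=a(u,v)$ for real $u,v$. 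This point matters because the abstract theory is formulated for real bilinear forms on real Hilbert spaces.

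Next, the G{\aa}rding inequality is simply the identity
$$
a(u,u)=\|u\|_{\Re(A)}^2-\|u\|_{L^2}^2,
$$
already computed for $u\in C_0^\infty(\rrd,\rr)$ and extended by density; read as $a(u,u)\ge\|u\|_{\Re(A)}^2-\|u\|_{L^2}^2$ it is precisely a G{\aa}rding inequality for the pair $H^{\Re(A)}, L^2(\rrd)$ with coercivity and shift constants both equal to one. (The sign condition $\Re(A)\ge0$ from \eqref{reAge0} even yields $a(u,u)\ge0$, but only the weaker G{\aa}rding form is needed.) Finally I would invoke the Gelfand triplet $H^{\Re(A)}\hookrightarrow L^2(\rrd)\hookrightarrow(H^{\Re(A)})^*$ recorded above and the classical existence and uniqueness result for parabolic evolution equations, see \cite{Wloka-english}: a continuous bilinear form satisfying a G{\aa}rding inequality on a Gelfand triple $V\hookrightarrow H\hookrightarrow V^*$ guarantees, for every $f\in L^2(0,T;V^*)$ and $g\in H$, a unique solution $u\in W^1(0,T;V,H)$ of $\dot u+\OA u=f$, $u(0)=g$. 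Taking $V=H^{\Re(A)}$ and $H=L^2(\rrd)$ gives the claim. I expect the only delicate part to be the bookkeeping in the first step, namely that the identities verified on $C_0^\infty(\rrd,\rr)$ truly characterise $a$ on all of $H^{\Re(A)}$ and that $a$ is real there; once $a$ is known to be a continuous, real, G{\aa}rding form on the triple, existence and uniqueness follow by direct citation.
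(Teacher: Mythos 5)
Your proposal is correct and follows essentially the same route as the paper: the continuity and G{\aa}rding properties are exactly the two computations for $u,v\in C_0^\infty(\rrd,\rr)$ preceding the theorem, extended by density of $C_0^\infty(\rrd,\rr)$ in $H^{\Re(A)}$, after which the classical parabolic existence and uniqueness theorem of \cite{Wloka-english} is invoked on the Gelfand triplet $H^{\Re(A)}\hookrightarrow L^2(\rrd)\hookrightarrow\big(H^{\Re(A)}\big)^\ast$. Your additional check that $a$ is real-valued on real test functions, via $A(\xi)=\overline{A(-\xi)}$ and the substitution $\xi\mapsto-\xi$, is a point the paper leaves implicit (it is what justifies $\int A(\xi)|\uhut(\xi)|^2\dd\xi=\int\Re\big(A(\xi)\big)|\uhut(\xi)|^2\dd\xi$) and is a welcome piece of bookkeeping.
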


For a given Gelfand triplet $V\hookrightarrow H\hookrightarrow V^\ast$, the space $W^1\big(0,T; V, H\big)$ consists of those functions $u\in L^2\big(0,T;V\big)$ that have a derivative $\partial_t{u}$ with respect to time in a distributional sense that belongs to the space $L^2\big(0,T; V^\ast\big)$. For a Hilbert space $H$, the space $L^2\big(0,T; H\big)$ denotes the space of functions $u:[0,T]\to H$, that are weakly measurable and that satisfy $\int_0^T\|u(t)\|_H^2 \dd t < \infty$. For the definition of weak measurability and for a detailed introduction of the space $W^1\big(0,T; V, H\big)$ that relies on the Bochner integral, we refer to the book of \cite{Wloka-english}.

In the following, we focus on the case that the space 
$H^{\Re(A)}$ is a Sobolev-Slobodeckii space $H^s(\rrd)$, i.e. to the case that the function $\Re(A)$ in the definition of the space $H^{\Re(A)}$ can be replaced by a polynomial $|\xi|^{\alpha}$ with $\alpha\in(0,1]$. One major advantage of these more concrete spaces is that the index of a Sobolev-Slobodeckii space indicates a certain degree of smoothness.
This leads us to define the Sobolev index of a PDO resp. of a L\'evy process.
\begin{defin}
Let $\OA$ be a PDO with symbol $A$. We say $\alpha\in (0,2]$ is the \emph{Sobolev index} of the symbol $A$, if for all $\xi\in \rrd$
\begin{eqnarray*}
\begin{array}{rcll}
 \big|A(\xi)\big|
&\le&
C_1\left(1+|\xi|^2\right)^{\alpha/2}\quad& \text{(Continuity condition) and }\\
\Re\big(A(\xi)\big)
&\ge&
C_2 |\xi|^\alpha - C_3\left( 1+|\xi|^2\right)^{\beta/2}\quad& \text{(G{\aa}rding condition)} 
\end{array}
\end{eqnarray*}
for some $0\le \beta<\alpha$ and constants $C_1,\,C_3\ge 0$ and $C_2>0$.

If $L$ is a L\'evy process with symbol $A$ and Sobolev index $\alpha$, we call $\alpha$ the \emph{Sobolev index of the L\'evy   process} $L$.
\end{defin}
Let us notice that the G{\aa}rding condition is an assumption on the asymptotic behaviour of the real part of the symbol for large values of $\xi$. In case of continuity of $\xi\to A(\xi)$, it is equivalent to the existence of a number $N>0$, such that 
$$
\Re\big(A(\xi)\big)
\ge
C_2 |\xi|^\alpha \qquad\text{for all }|\xi|>N\,.
$$

Not every L\'evy process has a Sobolev index, compare Example \ref{bsp-A-CGMY}. But for important classes of L\'evy processes we will show its existence in Section \ref{ab-sobordn} and \ref{sec-alpha-stable}.
\begin{proposition}\label{Gard-folgtDichte}
If the L\'evy process has Sobolev index $\alpha>0$, then for every $t>0$, the measure $\mu_t=P^{L_t}$ has a smooth and bounded density w.r.t. the Lebesgue measure.
 \end{proposition}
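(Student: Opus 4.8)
The plan is to read off the density directly from the characteristic function. By \eqref{mhut=eA} we have $\muhut_t(\xi)=\ee{-tA(-\xi)}$, so the Fourier transform of the finite measure $\mu_t$ is known explicitly. A finite measure has a bounded continuous Lebesgue density as soon as its characteristic function lies in $L^1(\rrd)$, and that density is smooth provided every polynomial multiple of the characteristic function is integrable. Concretely, I would aim to prove that $\xi\mapsto\xi^\gamma\muhut_t(\xi)$ belongs to $L^1(\rrd)$ for every multi-index $\gamma$; granting this, the density
$$
p_t(x)=\frac{1}{(2\pi)^d}\int \ee{-i\skl \xi,x\skr}\muhut_t(\xi)\,\dd\xi
$$
is well defined, bounded by $(2\pi)^{-d}\|\muhut_t\|_{L^1}$, and may be differentiated under the integral sign arbitrarily often.

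The heart of the matter is the decay of $\muhut_t$, which I would extract from the G{\aa}rding condition in the definition of the Sobolev index. Since $|\muhut_t(\xi)|=\ee{-t\Re(A(-\xi))}$ and $|-\xi|=|\xi|$, the lower bound on $\Re(A)$ gives
$$
|\muhut_t(\xi)|\le \ee{-tC_2|\xi|^\alpha + tC_3(1+|\xi|^2)^{\beta/2}}
$$
for all $\xi\in\rrd$. Because $0\le\beta<\alpha$, the term $|\xi|^\alpha$ strictly dominates $(1+|\xi|^2)^{\beta/2}\sim|\xi|^\beta$ as $|\xi|\to\infty$. I would make this precise by choosing $N>0$ so large that $tC_3(1+|\xi|^2)^{\beta/2}\le \frac{t}{2}C_2|\xi|^\alpha$ for $|\xi|>N$, whence
$$
|\muhut_t(\xi)|\le \ee{-\frac{t}{2}C_2|\xi|^\alpha}\qquad(|\xi|>N),
$$
while on the compact set $\{|\xi|\le N\}$ the bound $|\muhut_t|\le1$, valid for any characteristic function, suffices.

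This stretched-exponential decay, which uses crucially that $\alpha>0$, beats every polynomial, so $\int_{\rrd}|\xi|^k\,\ee{-\frac{t}{2}C_2|\xi|^\alpha}\,\dd\xi<\infty$ for all $k\ge0$, and hence $\xi^\gamma\muhut_t\in L^1(\rrd)$ for every $\gamma$. Fourier inversion then yields the bounded continuous density $p_t$, and repeated differentiation under the integral sign, justified by dominated convergence with dominating functions $|\xi|^{|\gamma|}\ee{-\frac{t}{2}C_2|\xi|^\alpha}$, shows that $\partial_x^\gamma p_t$ exists, is continuous and bounded for every $\gamma$; thus $p_t\in C^\infty(\rrd)$. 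The only genuine subtlety is the domination step: it rests entirely on the strict inequality $\beta<\alpha$ built into the Sobolev index, and this is where I would concentrate the careful estimate. Note that the continuity condition $|A(\xi)|\le C_1(1+|\xi|^2)^{\alpha/2}$ plays no role here; only the G{\aa}rding lower bound on $\Re(A)$ is needed, consistently with $\Re(A)\ge0$ from \eqref{reAge0}.
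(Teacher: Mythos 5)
Your proof is correct and takes essentially the same route as the paper: both use the G{\aa}rding condition to bound $|\muhut_t(\xi)| = \ee{-t\Re(A(-\xi))} \le \ee{-C_2 t|\xi|^\alpha + C_3 t(1+|\xi|^2)^{\beta/2}}$, observe that $\beta<\alpha$ forces stretched-exponential decay, and conclude that $\int_{\rrd}|\xi|^n|\muhut_t(\xi)|\,\dd\xi<\infty$ for every $n$, whence a smooth bounded density exists. The only difference is cosmetic: the paper closes by citing \cite[Proposition 28.1]{Sato} at this point, while you unpack that citation by performing the Fourier inversion and the differentiation under the integral sign explicitly, correctly handling the region $\{|\xi|\le N\}$ via the trivial bound $|\muhut_t|\le 1$.
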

\begin{proof}
The Fourier transform of the measure $\mu_t$ is given by $\hat{\mu}_t(\xi) = \ee{-t A(-\xi)}$ and
$$
\left|\hat{\mu}_t(\xi) \right| =  \ee{-t \Re\left(A(-\xi)\right)}\le \ee{- C_2 t |\xi|^\alpha + C_3 t \left( 1+|\xi|^2\right)^{\beta/2} }
$$
with $C_2>0$, $C_3\ge0$ and $0\le \beta<\alpha$ by assumption. This shows that the term $\left|\hat{\mu}_t(\xi) \right|$ decays exponentially fast for $|\xi|\rightarrow \infty$. Together with the continuity of $\xi \mapsto \Re\left(A(\xi)\right)$ finiteness of the moments $\int_{\rrd} |\xi|^n\left|\hat{\mu}_t(\xi) \right| \dd \xi < \infty$ for every $n\in\nn$ follows. The assertion now follows from \citeN[Proposition 28.1]{Sato}.
\end{proof}

Proposition \ref{Gard-folgtDichte} shows that the existence of a Sobolev index indicates the smoothness of the distribution of the process. Together with Proposition \ref{satz-momenteundblumenthal}, the assertion establishes ties between the smoothness of the distribution and path properties of the process, see the comments below Remark \ref{rem-rel-sob-blum}.

\noindent Before proving that the G{\aa}rding condition on the symbol entails a G{\aa}rding inequality of the associated bilinear form, we derive an elementary inequality:\\[1ex]
For $C_1>0$, $C_2\ge 0$, $0\le \beta <\alpha$ and $0<C_3<C_1$ there exits a constant  $C_4>0$ such that
\begin{align}\label{elementary-ineq}
C_1 x^\alpha - C_2 x^\beta \ge C_3 x^\alpha - C_4 \quad\text{for all }x\ge 0\,.
\end{align}
To show inequality \eqref{elementary-ineq}, it is enough to realize that for given constants $C_1,C_2,C_3,\alpha$ and $\beta$ as above, the point $x_0 = \left( \frac{\beta C_2}{\alpha(C_1-C_3)}\right)^{1/(\alpha- \beta)}$ is a global minimum of the function $f(x):= (C_1-C_3) x^\alpha - C_2 x^\beta$ on $\rr_{\ge 0}$.

\begin{lemma}
 Let $A\in S^0_\alpha$. If there exist constants $C_2>0$, $C_3\ge0$ and $0\le \beta<\alpha$ with
\begin{eqnarray*}
 \Re(A(\xi))
\ge
C_2|\xi|^{\alpha} - C_3(1+|\xi|^2)^{\beta/2}\qquad(\xi\in \rrd)\,,
\end{eqnarray*}
then the corresponding bilinear form satisfies a G{\aa}rding inequality with respect to $H^{\alpha/2}(\rrd)\hookrightarrow L^{2}(\rrd)$, i.e. there exist constants $c_2>0$ and $c_3 \ge0$ with
\begin{eqnarray*}
\Re\big( a(u,u) \big)
\ge
c_2\|u\|^2_{\alpha/2} - c_3\|u\|^2_{L^2} \,.
\end{eqnarray*}
\end{lemma}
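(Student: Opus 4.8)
The plan is to prove the Gårding inequality by working entirely on the Fourier side, where the bilinear form has the clean representation \eqref{bilform_via_symbol}. First I would recall that for $u\in S(\rrd)$ Parseval's identity gives
\begin{align*}
\Re\big(a(u,u)\big) = \int_{\rrd} \Re\big(A(\xi)\big)\,|\uhut(\xi)|^2\,\dd\xi\,,
\end{align*}
since $|\uhut(\xi)|^2$ is real and nonnegative. This reduces the assertion to a pointwise estimate on the integrand, so the entire proof is driven by the hypothesized lower bound on $\Re(A(\xi))$.

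The next step is to insert the Gårding condition $\Re(A(\xi)) \ge C_2|\xi|^\alpha - C_3(1+|\xi|^2)^{\beta/2}$ under the integral sign. This yields
\begin{align*}
\Re\big(a(u,u)\big) \ge C_2\int_{\rrd} |\xi|^\alpha |\uhut(\xi)|^2\,\dd\xi - C_3\int_{\rrd}(1+|\xi|^2)^{\beta/2}|\uhut(\xi)|^2\,\dd\xi\,.
\end{align*}
The goal norm is $\|u\|_{\alpha/2}^2 = \int (1+|\xi|^2)^{\alpha/2}|\uhut(\xi)|^2\,\dd\xi$, so the task is to convert the $|\xi|^\alpha$ term into $(1+|\xi|^2)^{\alpha/2}$ up to harmless corrections and to absorb the $\beta$-term into the $L^2$-norm plus a small fraction of the leading term. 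Here the elementary inequality \eqref{elementary-ineq} is exactly the tool designed for this: applying it pointwise with $x=|\xi|$ (and suitable constants arising from comparing $|\xi|^\alpha$ with $(1+|\xi|^2)^{\alpha/2}$ and $(1+|\xi|^2)^{\beta/2}$ with $|\xi|^\beta$) lets me write, for a fraction of the leading coefficient, $C_2|\xi|^\alpha - C_3(1+|\xi|^2)^{\beta/2} \ge c_2(1+|\xi|^2)^{\alpha/2} - c_3$ with $c_2>0$ and $c_3\ge 0$.

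The main technical obstacle, though a mild one, is handling the gap between $|\xi|^\alpha$ and the inhomogeneous weight $(1+|\xi|^2)^{\alpha/2}$: these agree up to multiplicative constants only for large $|\xi|$, and near the origin $|\xi|^\alpha$ vanishes while $(1+|\xi|^2)^{\alpha/2}$ stays bounded below by $1$. The clean way to manage this is to fix constants so that the estimate splits into a region $|\xi|>N$, where $|\xi|^\alpha$ dominates and the leading term controls everything including the $\beta$-correction via \eqref{elementary-ineq}, and a bounded region $|\xi|\le N$, where all weights are comparable to constants and the deficit is absorbed into the $-c_3\|u\|_{L^2}^2$ term. Once the pointwise bound $\Re(A(\xi))\ge c_2(1+|\xi|^2)^{\alpha/2} - c_3$ is established, integrating against $|\uhut(\xi)|^2$ immediately gives
\begin{align*}
\Re\big(a(u,u)\big) \ge c_2\|u\|_{\alpha/2}^2 - c_3\|u\|_{L^2}^2\,.
\end{align*}
Finally, since $S(\rrd)$ is dense in $H^{\alpha/2}(\rrd)$ and both sides are continuous in the respective norms (using Proposition \ref{st-sybol-Hs} for the continuity of $a$), the inequality extends from Schwartz functions to all of $H^{\alpha/2}(\rrd)$, completing the proof.
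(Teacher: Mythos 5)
Your argument is correct and takes essentially the same route as the paper's proof: both pass to the Fourier side via \eqref{bilform_via_symbol}, reduce the claim to a pointwise lower bound of the form $\Re\big(A(\xi)\big)\ge c_2(1+|\xi|)^{\alpha}-c_3$ by means of the elementary inequality \eqref{elementary-ineq}, and then integrate against $|\uhut(\xi)|^2$; your split into the regions $|\xi|\le N$ and $|\xi|>N$ performs exactly the same bookkeeping as the paper's comparison $(1+|x|^2)^{\beta}\le 2^{\beta}\big(1+|x|^{2\beta}\big)$. The only cosmetic difference is that you spell out the density extension from $S(\rrd)$ to $H^{\alpha/2}(\rrd)$, which the paper leaves implicit.
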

\begin{proof}
For $u\in S(\rrd)$ we have
\begin{eqnarray*}
 \Re\big( a(u,u) \big)
&\ge&
\int \left( C_2 |\xi|^{2\alpha} - C_3 (1+|\xi|^2)^{\beta } \right) |\uh(\xi)|^2 \dd \xi \,.
\end{eqnarray*}
Furthermore we have $(1+|x|^2)^{\beta } \le 2^{\beta } (1+|x|^{2\beta })$, since
for $f(x) = (1+|x|^{2\beta })$ and $g(x) = (1+|x|^2)^{\beta }$ we get
\begin{eqnarray*}
 \frac{2^{\beta } f(x) }{g(x)}
=
\frac{2^ {\beta }}{(1+|x|^2)^{\beta } }+ \left(\frac{2|x|^2}{1+x}\right)^{\beta } \,.
\end{eqnarray*}
The first summand is bigger or equal to $1$ if $x\le 1$, whereas the second summand is bigger or equal to $1$ if $x\ge 1$. As both summands are positive for $x\ge 0$, we have  $2^{\beta } f(x) \ge g(x)$ for all $x\ge 0$.
Together with inequality \eqref{elementary-ineq} this yields
\begin{eqnarray*}
 C_2 |\xi|^{2\alpha} - C_3 (1+|\xi|^2)^{\beta } 
&\ge C_2 |\xi|^{2\alpha} - C_3 '(1+|\xi|^{2\beta }) 
\ge 
c_2 (1+|\xi|)^{2\alpha} - c_3
\end{eqnarray*}
with a strictly positive positive constant $c_2$ and $C_3', c_3\ge0$,
which yields the result.

\end{proof}
As argued to conclude Theorem \ref{abstracttheorem}, from the classical result on existence and uniqueness of solutions of parabolic differential equations, we obtain the following result.
\begin{theorem}\label{exeind-anhandsymbol}
 Let $\OA$ be a PDO with symbol $A$ and Sobolev index $\alpha$ for some $\alpha>0$. Then the parabolic equation
\begin{equation}\label{parab-gln}
\begin{split}
 \partial_t u + \OA u =& f \\
u(0) =& g\,,
\end{split}
\end{equation}
for $f  \in L^2\big(0,T;H^{-\alpha/2}(\rrd)\big)$ and $g\in L^2(\rrd)$ has a unique weak solution $u$ in the space $W^1\big(0,T; H^{\alpha/2}(\rrd), L^2(\rrd)\big)$.
\end{theorem}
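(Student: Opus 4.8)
The plan is to recognise \eqref{parab-gln} as an abstract parabolic Cauchy problem on the Gelfand triplet
$$
H^{\alpha/2}(\rrd) \hookrightarrow L^2(\rrd) \hookrightarrow H^{-\alpha/2}(\rrd)
$$
and to invoke the classical existence and uniqueness theorem for such problems from \cite{Wloka-english}, exactly as was done to obtain Theorem \ref{abstracttheorem}. That theorem requires three ingredients: that the embeddings form a Gelfand triplet with $V=H^{\alpha/2}(\rrd)$ continuously and densely embedded into the pivot space $H=L^2(\rrd)$; that the bilinear form $a$ be continuous on $V$; and that $a$ satisfy a G{\aa}rding inequality with respect to $(V,H)$. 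The first is standard for $\alpha>0$, with $\big(H^{\alpha/2}(\rrd)\big)^\ast\cong H^{-\alpha/2}(\rrd)$ as noted after Lemma \ref{st-p.o.}. The remaining two ingredients are precisely what the two defining conditions of the Sobolev index deliver.

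First I would extract continuity from the continuity condition. Since $(1+|\xi|^2)^{\alpha/2}\le(1+|\xi|)^{\alpha}$, the bound $|A(\xi)|\le C_1(1+|\xi|^2)^{\alpha/2}$ shows $A\in S^0_\alpha$. Proposition \ref{st-sybol-Hs} then applies with $s=\alpha/2$ and yields both that $\OA:H^{\alpha/2}(\rrd)\to H^{-\alpha/2}(\rrd)$ is continuous and that the associated bilinear form $a(u,v)=(\OA u)(v)$ satisfies $|a(u,v)|\le c\,\|u\|_{\alpha/2}\|v\|_{\alpha/2}$. Second, the G{\aa}rding condition $\Re(A(\xi))\ge C_2|\xi|^\alpha - C_3(1+|\xi|^2)^{\beta/2}$ with $0\le\beta<\alpha$ is exactly the hypothesis of the Lemma immediately preceding this theorem; applying it produces constants $c_2>0$ and $c_3\ge0$ with
$$
\Re\big(a(u,u)\big)\ge c_2\|u\|^2_{\alpha/2}-c_3\|u\|^2_{L^2}\qquad\text{for all }u\in S(\rrd),
$$
which, by density of $S(\rrd)$ in $H^{\alpha/2}(\rrd)$, is the desired G{\aa}rding inequality on the triplet.

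With all hypotheses in place, I would feed the problem to the abstract theorem. The one point needing care is that a G{\aa}rding inequality is weaker than coercivity, so the abstract result is reduced to the coercive case by the exponential substitution $u(t)=\ee{c_3 t}w(t)$: this transforms \eqref{parab-gln} into $\partial_t w+(\OA+c_3)w=\ee{-c_3 t}f$, whose bilinear form $a(\cdot,\cdot)+c_3\langle\cdot,\cdot\rangle_{L^2}$ is now coercive on $H^{\alpha/2}(\rrd)$, while the data remain in the required classes $\ee{-c_3 t}f\in L^2\big(0,T;H^{-\alpha/2}(\rrd)\big)$ and $g\in L^2(\rrd)$. The coercive problem has a unique solution in $W^1\big(0,T;H^{\alpha/2}(\rrd),L^2(\rrd)\big)$ by the Lions--Lax--Milgram theory underlying \cite{Wloka-english}, and multiplying back by $\ee{c_3 t}$, which preserves that space, returns the unique weak solution of the original problem.

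The main obstacle is essentially bookkeeping rather than a genuine difficulty: one must confirm that the time dependence and the Bochner-integral structure of $W^1\big(0,T;H^{\alpha/2}(\rrd),L^2(\rrd)\big)$ match the hypotheses of the cited abstract theorem, namely weak measurability of $t\mapsto f(t)$, the autonomous and $t$-independent nature of $a$, and stability of the function space under the exponential shift. Once the Sobolev-index conditions have been translated into continuity of $a$ and the G{\aa}rding inequality, existence and uniqueness follow by direct citation.
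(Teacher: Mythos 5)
Your proposal is correct and follows essentially the same route as the paper: the paper likewise deduces the theorem by combining the continuity condition (via Proposition \ref{st-sybol-Hs}, since the continuity bound gives $A\in S^0_\alpha$) with the G{\aa}rding lemma immediately preceding the statement, and then citing the classical existence and uniqueness theorem for parabolic problems on the Gelfand triplet $H^{\alpha/2}(\rrd)\hookrightarrow L^2(\rrd)\hookrightarrow H^{-\alpha/2}(\rrd)$ from \cite{Wloka-english}. Your additional remarks on the exponential shift reducing the G{\aa}rding case to the coercive one are part of the standard machinery behind the cited theorem, which the paper leaves implicit.
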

Moreover the solution $u$ depends continuously on the data $g$ and $f$. The proof of the classical theorem is based on a so-called Galerkin-approximation that yields a numerical scheme to calculate the solution approximately, namely a finite element scheme, see e.g. \cite[Theorem 23.A]{Zeidler}.
%

In light of Theorem \ref{exeind-anhandsymbol}, the Sobolev index appears as a measure of the degree of the smoothing effect of the related evolution problem.
Under appropriate additional assumptions, a Feynman-Kac formula for weak solutions yields a stochastic representation.
Thus, the Sobolev index represents a measure for the smoothing effect of the distribution of the L\'evy process.
\section{Sobolev indices of L\'evy processes}\label{ab-sobordn}
Let us observe that for two L\'evy processes $L^i$ with symbol $A^i$ and Sobolev index $\alpha_i$ for $i=1,\,2$, the sum $L:=L^1+L^2$ is a L\'evy process with symbol  given by $A:=A^1+A^2$, and obviously the process has a Sobolev index that equals $\max(\alpha_1,\alpha_2)$.

\begin{example}[L\'evy process with Brownian part]
 $\rrd$-valued L\'evy pro\-cesses $L$ with characteristics $(b,\sigma,F)$ with a positive definite matrix $\sigma$ have Sobolev index $2$.
\end{example}
\begin{proof}
Let us observe that
$$
\Re\big(A(\xi)\big) = \frac{1}{2} \skl \xi, \sigma \xi \skr + \int \Big( 1 - \cos\big(\skl \xi, h(y)\skr \big)\Big) F(\dd y) \ge \frac{1}{2} \skl \xi, \sigma \xi \skr\,.
$$
 Since the matrix $\sigma$ is symmetric and positive definite $\underline{\sigma} |\xi|^2 \le \langle\xi,\sigma\,\xi\rangle$ for all $\xi\in\rrd$, 
 where $0<\underline{\sigma}$ is the smallest eigenvalue of the matrix $\sigma$. As a consequence we have $\underline{\sigma}|\xi|^2 \le \Re\big(A(\xi)\big)$, i.e. the G{\aa}rding condition. Continuity follows immediately from inequality \eqref{A<=pol2}.
\end{proof}
\begin{example}\label{ex-multidNIG}(Multivariate NIG-processes)
Let $L$ be an $\rr^d\!$-valued NIG-process, i.e.
 \[L_1=(L^1_1,\ldots,L^d_1)\sim\text{NIG}_d(\alpha,\beta,\delta,\mu,\Delta),\] 
with parameters
$\alpha,\delta\in\rr_{\geqslant0}$, $\beta,\mu\in\rr^d$ and a symmetric positive definite matrix $\Delta\in\rr^{d\times d}$ with $\alpha^2>\langle \beta, \Delta\beta \rangle$. Then the characteristic function of $L_1$ in $u\in\rr^d$ is given by
\begin{align*}
E\!\ee{i\langle u,L_1\rangle}
 \!&= \exp\!\left(\! i \langle u,\mu\rangle 
       +  \delta\!\Big(\sqrt{\alpha^2-\langle \beta,\Delta\beta\rangle}
           -\sqrt{\alpha^2-\langle \beta+iu,\Delta(\beta+iu)\rangle}\Big)\right)\!,
\end{align*}
where by $\skl \cdot,\cdot\skr$ we denote the product $\skl z,z'\skr = \sum_{j=1}^d z_j z_j'$ for $z\in\ccd$. Note that this is not the Hermitian scalar product. In \cite{Barndorff-Nielsen1977} multivariate NIG-distributions are derived as a subclass of multivariate GH-distributions via a mean variance mixture. 
We verify that $\rrd$-valued NIG-processes have Sobolev index $1$.
\end{example}

\begin{proof}
Similar to the calculations in \cite[Appendix~B]{EberleinGlauPapapantoleon10a} for real-valued NIG-processes,
\begin{align*}
z
:=&\, \alpha^2 - \skl \beta - iu, \Delta(\beta - iu)\skr \\
=&\,
\alpha^2 - \skl \beta, \Delta \beta\skr + \skl u, \Delta u\skr +  i \skl \beta, \Delta u\skr + i \skl u, \Delta\beta\skr
\end{align*}
and $\sqrt{z} = \sqrt{\frac{1}{2}(|z| + \Re(z))} + i \frac{\Im(z)}{|\Im(z)|} \sqrt{\frac{1}{2}(|z| - \Re(z))}$
it follows
$
|z| \ge \alpha^2 - \skl \beta, \Delta \beta\skr + \skl u, \Delta u\skr > 0
$
and
\begin{align*}
 \Re\big(A(u)\big)
=&\,
-\delta \sqrt{\alpha^2 - \skl\beta,\Delta \beta\skr } + \delta \Re\big( \sqrt{ z}\big) \\
=&\,
\frac{\delta}{\sqrt{2}} \sqrt{|z| + \Re(z)}
- \delta \sqrt{\alpha^2 - \skl\beta,\Delta \beta\skr }  \\
\ge&\,
  \delta \sqrt{\alpha^2 - \skl\beta,\Delta \beta\skr + \skl u, \Delta u \skr} 
- \delta \sqrt{\alpha^2 - \skl \beta, \Delta \beta \skr }\\
\ge&\,
\delta \sqrt{\lambda_{\min} } |u| - \delta \sqrt{\alpha^2 - \skl\beta,\Delta \beta\skr} \,,
\end{align*}
where $\lambda_{\min}$ denotes the smallest eigenvalue of the matrix $\Delta$. Analogously it follows that
$|\Re(u) |\le C_1(1+|u|)$ and $|\Im(u) |\le C_2(1+|u|)$ with positive constants $C_1,\,C_2$, which yields
$|A(u) |\le C(1+|u|)$ with a positive constant $C$.
\end{proof}
\begin{example}[Cauchy processes]
Let $L$ be a Cauchy process with values in $\rrd$, then the distribution $\mu:=P^{L_1}$ has the Lebesgue density
$$
f(x) = c \frac{\Gamma((d+1)/2)}{\pi^{(d+1)/2}} \big(|x-\gamma|^2 + c^2\big)^{-(d+1)/2}
$$
where $\mu\in\rr^d$ and $c>0$, and its characteristic function is given in 
$$
\muhut(u) = \ee{-c|u| + i\langle \gamma,u \rangle }\,,
$$
see \cite[Example 2.12]{Sato}. It follows immediately that the process has Sobolev index $1$.

\end{example}
\begin{example}[Student-$t$ processes]
Let $L$ be a L\'evy process such that the distribution of $L_1$ is student-$t$ with parameters $\mu\in\rr$, $f>0$ and $\delta>0$ i.e.
$$
P^{L_1}(\dd x) = \frac{\Gamma\big((f+1)/2\big)}{\sqrt{\pi\delta^2} \, \Gamma(f/2)} \Big( 1 + \frac{x-\mu}{\delta^2} \Big)^{-(f+1)/2}\,.
$$
This generalisation of the student-$t$ distribution is studied in \cite{EberleinHammerstein04}, where it appears as limit of GH distributions for parameters $\alpha,\,\beta \downarrow 0$ with negative $\lambda$.

We show that $L$ has Sobolev index $1$.
\end{example}
\noindent
\par
\begin{proof}
The characteristic function $\muhut$ of the student-$t$ distribution reads as follows
$$
\muhut(u) = \left(\frac{f}{4}\right)^{f/4} \frac{2 K_{-f/4}\left(\sqrt{f}|u|\right)}{\Gamma\big(f/2\big)} |u|^{f/4} \ee{i\mu u}\,.
$$
with $\delta:=f/4$ and $c:=\log\left\{(f/4)^{f/4}/\Gamma(f/2) \right\}$, compare \cite{EberleinHammerstein04}. We obtain the following representation of the associated symbol,
\begin{equation}\label{eq_AwithKdelta}
A(u) 
=
 -c - \log\left\{ K_{-\delta} \big(2\sqrt{\delta} |u| \big) \right\} - \log\left\{|u|^{2\delta} \right\} + i \mu u \,.
\end{equation}
Since the mapping $u\mapsto A(u)$ is continuous, it is enough to verify the continuity and G{\aa}rding inequality for a function that is asymptotically equivalent to $A$. To this aim we insert the asymptotic expansion of the Bessel function $K_\lambda$, see \cite[equation  (9.7.2)]{AbramowitzStegun}\zitep{S.378}.
\begin{align*}
K_\lambda(z) \sim& \sqrt{\frac{\pi}{2z}} \ee{-z} \bigg\{ 1+ \frac{\mu-1}{8z} + \frac{(\mu-1)(\mu-9)}{2!(8z)^2} \\
&+ \frac{(\mu-1)(\mu - 9)(\mu-25)}{3!(8z)^3} + \cdots\bigg\}
\end{align*}
for $|\arg z|<\frac{3}{2} \pi$ and $|z|\rightarrow \infty$ with $\mu = 4\lambda^2$ with the usual notation $f(x)\sim g(x)$ for $|x|\to\infty$ if $\frac{f(x)}{g(x)}\to 1$ for $|x|\to \infty$. \\
In particular
$$
K_\lambda(z) \sim  \sqrt{\frac{\pi}{2z}} \ee{-z}=:g(z)\quad\text{for $z$ real and }z\rightarrow \infty
$$
and $K_\lambda(z) \rightarrow 0$ as well as $g(z)\rightarrow 0$ for $z\rightarrow\infty$. It follows
$$
\frac{\log\big(K_\lambda(z)\big)}{\log\big(g(z)\big)}\sim \frac{g'(z)}{K_\lambda'(z)} \quad\,\text{with }\,K_\lambda'(z) =  \frac{\lambda}{z}K_\lambda(z) - K_{\lambda+1}(z)\,,
$$
compare p. 79 equation (4) in \cite{Watson}. We conclude
$$
\frac{\log\big(K_{-\delta}(z)\big)}{\log\big(g(z)\big)}\sim \frac{ -g(z) - \frac{1}{2z} g(z) }{ \frac{{-\delta}}{z}K_{-\delta}(z) - K_{{-\delta}+1}(z)} \sim \frac{ g(z)}{K_{{-\delta}}(z) } \sim 1\,.
$$
Therefore
\begin{equation}\label{eq-logKdelta}
\log\left\{ K_{-\delta}\big(2\sqrt{\delta} |u| \big) \right\} \sim \log\left\{ \sqrt{\frac{\pi}{4\sqrt{\delta}|u|}} \right\} - 2\sqrt{\delta} |u|  + \log\left\{1+ \left|O\left(\frac{1}{|u|}\right)\right|\right\}
\end{equation}
for $|u|\rightarrow \infty$, where $O$ denotes Landau's symbol, i.e. we write $f(x) = O\big(g(x)\big)$ for $|x|\to\infty$ if there exists constants $M, N$ s.t. $\frac{|f(x)|}{|g(x)|}\le M$ for all $|x|>N$. Inserting equation \eqref{eq-logKdelta} in equation \eqref{eq_AwithKdelta} we obtain for the real part of the symbol
\begin{eqnarray*}
\Re\big( A(u)\big) 
&\sim& 
-c-  \log\left\{ \sqrt{\frac{\pi}{4\sqrt{\delta}|u|}} \right\} + 2\sqrt{\delta} |u|\\
&&  - \log\left\{1+ \left|O\left(\frac{1}{|u|}\right)\right|\right\} - 2\delta\log{|u|}\,.
\end{eqnarray*}
From the boundedness of the term $\log\left\{1+ \left|O\left(\frac{1}{|u|}\right)\right|\right\}$ for $|u|\to \infty$ we further get
\begin{eqnarray*}
\Re\big( A(u)\big) 
&\sim& 
 2\sqrt{\delta}|u| - \big(1+2\delta\big) \log{|u|} \\
&\ge&
 2\sqrt{\delta}|u| - \big(1+2\delta\big) \sqrt{|u|}\,,
\end{eqnarray*}
since $|\log{|u|}| \le \sqrt{|u|}$. This shows the G{\aa}rding-condition and moreover that $\big|\Re\big( A(u)\big)\big| \le c|u|$ for some constant $c\ge 0$. Furthermore the imaginary part equals $\Im\big(A(u)\big) = \mu u$, hence the continuity condition is also satisfied.
\end{proof}

\subsection{Sobolev index for L\'evy processes with absolutely continuous L\'evy measure}\label{sec-abscontF}

In this subsection, we study the Sobolev index for real-valued L\'evy processes without Brownian part whose L\'evy measure has a Lebesgue density.
If the process has no Brownian part, the G{\aa}rding condition only depends on the real part of the  integral $\int \big(\ee{-iux} - 1 + ih(x)u\big) F(\dd x)$, which translates to properties of the symmetric part of the L\'evy measure.
\par
Let $A$ be the symbol of a real-valued L\'evy process that is a special semimartingale $L$ with operator $\OA$. Let $(b,0,F)$ be the characteristic triplet of $L$ w.r.t. $h(x) = x$. Furthermore assume $F(\dd x) = f(x) \dd x$ for the L\'evy measure $F$. We denote by $f_s$ the symmetric and by $f_{as}$ the antisymmetric part of the density function $f$, i.e. $f_s(x) = \frac{1}{2} \big( f(x) + f(-x)\big)$ and $ f(x)= f_s(x) + f_{as}(x)$ for every $x\in\rr$.

For every $u\in\rr$ we define
\begin{eqnarray*}
 A^{f_s}(u) &:=& - \int \left( \ee{-iux} - 1 + iux\right) f_s(x) \dd x 
\,=\, 
- \int \left( \cos(ux) - 1\right) f_s(x) \dd x, \\
A^{f_{as}}(u) &:=& - \int \left( \ee{-iux} - 1 + iux\right) f_{as}(x) \dd x
\,=\,
 i \int \left( \sin(ux) - ux \right) f_{as}(x) \dd x,\\
A^{f}(u) &:=& - \int \left( \ee{-iux} - 1 + iux\right) f(x) \dd x
 \,=\,
A^{f_s}(u) + A^{f_{as}}(u) .
\end{eqnarray*}
Note the following equalities,
\begin{align}
\Re(A^f) &= A^{f_s}\,,\quad i\Im(A^f) = A^{f_{as}}\,,\label{eq-reA-imA}\\
\Re\big(A(u)\big) &= \frac{1}{2} \skl u, \sigma u\skr + A^{f_s}(u)\,,\label{eq-reA}\\
\Im\big(A(u)\big) &= \skl u, b\skr  - iA^{f_{as}}(u) = \skl u, b\skr + \int \big( \sin(ux) - ux\big) f_{as}(x) \dd x \,. \label{eq-imA}
\end{align}
Let us further notice the following elementary assertion.
\begin{lemma}\label{bem_fas-kl-fs}
If $F$  is a nonnegative measure, absolutely continuous with respect to the Lebesgue measure, it has a nonnegative density $f\ge 0$ and $|f_{as}| \le f_s$.
\end{lemma}
\begin{proof}
If $F(\dd x) = f(x) \dd x$ with a nonnegative measure $F$, then $F(\dd x) = |f(x)|\dd x$ hence w.l.g. $f(x) \ge 0$.

Thus we have $f_s\ge 0$, since otherwise there would exist a number $y\in\rr$ with $f_s(y)<0$ and $f_s(-y)=f_s(y)<0$.
However, since $f_{as}(y)\le 0$ or $f_{as}(-y)\le 0$ we would get $f(y)<0$ or $f(-y)<0$ i.e. a contradiction.

Furthermore we have $|f_{as}|\le f_s$, since otherwise there would exist a number $y\in\rr$ with $-f_{as}(y)>f_s(y)$ i.e. $f(y) <0$ or $f_{as}(y) >f_s(y)$, from where we would get $f(-y)<0$.
\end{proof}

In the following proposition we derive the Sobolev index for L\'evy processes without Brownian part from the behaviour of the L\'evy measure $F$ around the origin. It uses a rather technical lemma that is provided in appendix A.

\begin{proposition}\label{satz-hilfF-sobindex}
Let $L$ be a real-valued L\'evy process and a special semimartingale with characteristic triplet $(b,0,F)$ with respect to the truncation function $h(x)=x$.

Let
\vspace*{-1ex}
\begin{eqnarray}\label{eq_f_s}
f_{s}(x) &= \frac{C}{|x|^{1+Y}} + g(x) \qquad\text{with $g(x) = O\left(\frac{1}{|x|^{1+Y-\delta}}\right)$ for $x\to0$}
\end{eqnarray}
with $0<\delta$. In the following cases, the L\'evy process $L$ has Sobolev index $Y$.
\begin{itemize}
 \item [a)]
Let $0<Y<1$ and
\vspace*{-1ex}
\begin{eqnarray*}
f_{as}(x) = O\left(\frac{1}{|x|^{\alpha}}\right)\quad\text{for $x\to0$}
\end{eqnarray*}
with $\alpha\le 1 + Y$,
$\int \big|x f_{as}(x)\big| \dd x <\infty$, and moreover $b= \int x F(\dd x)$.
\item[b)]
Let $Y=1$ and
\begin{eqnarray*}
f_{as}(x)
=
O\left(\frac{1}{|x|^{\alpha}}\right)\quad\text{for $x\to0$ }
\end{eqnarray*}
with $\alpha < 1+Y=2$.
\item[c)]
Let $1<Y<2$.
\end{itemize}
\end{proposition}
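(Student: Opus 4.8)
The plan is to verify directly the two defining conditions of the Sobolev index for the symbol $A$: the continuity bound $|A(u)|\le C_1(1+|u|^2)^{Y/2}$ and the G\aa rding bound $\Re(A(u))\ge C_2|u|^Y-C_3(1+|u|^2)^{\beta/2}$ for some $0\le\beta<Y$. Since $\sigma=0$, formulas \eqref{eq-reA} and \eqref{eq-imA} reduce to $\Re(A(u))=\int(1-\cos(ux))f_s(x)\,\dd x$ and $\Im(A(u))=ub+\int(\sin(ux)-ux)f_{as}(x)\,\dd x$. Because $u\mapsto A(u)$ is continuous with $A(0)=0$, it suffices to establish both estimates for large $|u|$; the bounds on any compact set are then absorbed into the constants, using $(1+|u|^2)^{Y/2}\ge1$. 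The analysis of $\Re(A)$ is identical in all three cases and yields the G\aa rding condition together with the real half of the continuity bound, whereas the imaginary part forces the case distinction.

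For the real part I would insert $f_s(x)=C|x|^{-(1+Y)}+g(x)$ and split $\Re(A(u))$ accordingly. The leading piece is the symbol of the symmetric $Y$-stable law: by the substitution $t=|u|x$ one obtains $C\int(1-\cos(ux))|x|^{-(1+Y)}\,\dd x=Cc_Y|u|^Y$ with $c_Y:=\int(1-\cos t)|t|^{-(1+Y)}\,\dd t\in(0,\infty)$ finite for $0<Y<2$. For the remainder $\int(1-\cos(ux))g(x)\,\dd x$ I would split the domain at $|x|=1/|u|$: on $\{|x|<1/|u|\}$ use $1-\cos(ux)\le(ux)^2/2$ together with $x^2|g(x)|=O(|x|^{1-Y+\delta})$, and on $\{|x|>1/|u|\}$ use $1-\cos\le2$ together with the near-origin bound on $|g|$ and the integrability of $g$ away from $0$ (which holds since $f_s$ is a L\'evy density and $|x|^{-(1+Y)}$ is integrable at infinity). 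Both pieces are $O(|u|^{Y-\delta})$, so $\Re(A(u))=Cc_Y|u|^Y+O(|u|^{Y-\delta})$. Choosing (without loss of generality) $\delta<Y$ this gives the G\aa rding condition with $C_2=Cc_Y>0$ and $\beta=Y-\delta$, as well as $|\Re(A(u))|=O(|u|^Y)$.

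For the imaginary part I would treat the cases separately, in each showing $|\Im(A(u))|=O(|u|^Y)$, which completes the continuity bound via $|A|\le|\Re A|+|\Im A|$. In case (a), $0<Y<1$, the absolute integrability of $xf_{as}$ makes $\int xf_s\,\dd x=0$ by oddness and hence $b=\int xF(\dd x)=\int xf_{as}(x)\,\dd x$; substituting this cancels the linear term and leaves $\Im(A(u))=\int\sin(ux)f_{as}(x)\,\dd x$, which I would bound by splitting at $1/|u|$ using $|\sin(ux)|\le|ux|$ on the inner region and $f_{as}=O(|x|^{-\alpha})$ with $\alpha\le1+Y$ on the outer region, yielding $O(|u|^{\alpha-1})=O(|u|^Y)$. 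In case (b), $Y=1$, I keep the full expression and estimate $\int(\sin(ux)-ux)f_{as}(x)\,\dd x$ by splitting at $1/|u|$, using $|\sin t-t|\le|t|^3/6$ inside and $|\sin t-t|\le2|t|$ outside; with $\alpha<2$ both contributions are $O(|u|)$, and $ub=O(|u|)$. In case (c), $1<Y<2$, there is no explicit hypothesis on $f_{as}$, so I invoke Lemma \ref{bem_fas-kl-fs} to get $|f_{as}|\le f_s=O(|x|^{-(1+Y)})$, run the same splitting as in (b) with $\alpha=1+Y$, and absorb $ub=O(|u|)$ into $O(|u|^Y)$ since $Y>1$.

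The technical heart, and the step I expect to be the main obstacle, is the uniform asymptotic control of these oscillatory integrals near their critical integrability exponents; this is exactly the role of the lemma in Appendix~A. The delicate points are that the remainder exponents (such as $2-Y+\delta$, $4-\alpha$, $1-\alpha$) stay strictly above $-1$ so that the split-at-$1/|u|$ estimates converge, that the borderline logarithmic case ($\alpha=1$ in (a)) is still dominated by $|u|^Y$, and, specific to case (a), the justification of the oddness argument giving $b=\int xf_{as}\,\dd x$, which relies on $Y<1$ to make $xf_s$ integrable at the origin and on the special-semimartingale property to make it integrable at infinity.
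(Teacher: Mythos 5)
Your proposal is correct and takes essentially the same route as the paper: the paper's proof delegates exactly these estimates to Lemma \ref{lem_sob_ordn} in Appendix A (parts a) and b) for the real part and the G{\aa}rding bound, parts c) and d) for the imaginary part) together with Lemma \ref{bem_fas-kl-fs}, including your drift cancellation $b=\int x f_{as}(x)\,\dd x$ in case a), the reduction of case b) to an exponent strictly below $2$, and the bound $|f_{as}|\le f_s$ in case c). The only cosmetic differences are that you compute the leading stable term exactly by the scaling identity $\int(1-\cos(ux))|x|^{-1-Y}\dd x = c_Y|u|^Y$ and split the remaining integrals at $|x|=1/|u|$, whereas the paper's appendix lemma works on a fixed neighbourhood $(-\epsilon,\epsilon)$ with the substitution $t=ux$ and Sato's $\sin^2$ lower bound.
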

\begin{proof}
In each of the three cases, according to part b) of Lemma \ref{lem_sob_ordn}, the G{\aa}rding condition follows directly from
$$
\Re\left(A(u)\right) = \Re\big(A^{f}(u)\big) = \Re\big(A^{f_s}(u)\big) \ge C|u|^Y - C_1\left(1 + |u|^{Y'} \right)
$$
with $C>0$ and $C_1\ge 0$ and $0<Y'<Y$.
\par
Splitting $A$ in its real and its imaginary part, assertion a) and d) of Lemma \ref{lem_sob_ordn} yield the continuity condition in case a) with index $Y$, since the assumption $f_{as} (x) = O \left(\frac{1}{|x| ^\alpha} \right)$ for some $\alpha \in(0, 1+Y]$ implies $f_{as}(x) = O\left(\frac{1}{|x|^{1+Y}} \right)$.
\par
In order to verify the continuity conditions for b) and c), we first notice that
\begin{eqnarray*}
 \big|A(u)\big|
&\le&
\left|A^{f_s}(u)\right| + \left| \Im \left(A^f(u)\right) \right| + |b| |u| 
\le
C_2\!\big( 1+|u|^Y\big) +  \left| A^{f_{as}}(u) \right| + |b| |u| 
\end{eqnarray*}
follows from Lemma \ref{lem_sob_ordn} a).
\par
Concerning case b), we notice that the  assumption on $f_{as}$ implies $f_{as}(x) = O\left(\frac{1}{|x|^{1+Y'}}\right)$ with some $0<Y'<1$. Lemma \ref{lem_sob_ordn} c) yields $\left|A^{f_{as}}(u)\right|  \le C_3\left(1+|u|\right)$. Together with Lemma \ref{lem_sob_ordn} a) this yields
$$
\big|A(u)\big| \le C_2\left( 1+|u|\right) + C_2'\left( 1+|u|^{1-\delta}\right) +  C_3\left(1+|u|\right)  + |b| |u| \le C\left( 1+|u|\right)
$$
with nonnegative constants $C_2,C_2',C_3$ and $C$. In other words we have shown the continuity condition for case b).
\par
Finally, to verify the continuity condition for $1<Y<2$, let us notice that from Lemma \ref{bem_fas-kl-fs} we know $\left|f_{as}\right| \le f_s$ so that
$$
\left|f_{as}(x)\right| = O\left(\frac{1}{|x|^{1+Y}}\right)
$$
for $|x|\rightarrow 0$. Due to Lemma \ref{lem_sob_ordn} c) we have $\left|A^{f_{as}}(u)\right| \le C_3\left( 1+ |u|^Y\right)$ and altogether we obtain
\begin{align*}
\big|A(u)\big| \le C_2\left( 1+|u|^Y\right) +  C_3\left(1+|u|^Y\right)  + |b| |u| \le C'\left( 1+|u|^Y\right) \,. 
\end{align*}
\end{proof}
\begin{example}\label{bsp-A-GH}
Generalised Hyperbolic (GH) processes have Sobolev index $1$.
\end{example}
\begin{proof}
The L\'evy measure $F^{GH}$ of a GH process has a Lebesgue density $F^{GH}(\dd x) = f^{GH}(x)\dd x$ with
$$
f^{GH}(x) = C_1 \frac{1}{x^2} + C_2\frac{1}{|x|} + C_3\frac{1}{x} + \frac{o(|x|)}{x^2}
$$
with $\frac{o(|x|)}{|x|}\rightarrow 0$ for $|x|\rightarrow 0$, see \citeN[Proposition 2.18]{Raible}.
Hence the symmetric part of $f^{GH}$ is of the form
$$
f_s^{GH}(x) = \frac{C}{|x|^2} + O\left(\frac{1}{|x|}\right)\qquad\text{ for $x\to 0$,}
$$
and the antisymmetric part is of the form
$$
f_{as}^{GH}(x) = O\left(\frac{1}{|x|}\right)\qquad \text{ for $x\to 0$.}
$$
The assertion follows from part b) of Theorem \ref{satz-hilfF-sobindex}.
\end{proof}

\begin{example}\label{bsp-A-CGMY}
A CGMY L\'evy process with parameters $C$, $G$, $M>0$ and $Y<2$, is a L\'evy process that has no Brownian part and its L\'evy measure $F^{CGMY}$ is given by its Lebesgue density
\begin{eqnarray*}
f^{\operatorname{CGMY}}(x) = 
 \left\{
 \begin{array}{ll}
\frac{C}{|x|^{1+Y}}  \ee{G x}  \quad&\text{for }x<0 \\
\frac{C}{|x|^{1+Y}}  \ee{-Mx}  \quad&\text{for }x\ge0 \,,
 \end{array}
\right.
\end{eqnarray*}
compare \cite{CarrGemanMadanYor2002}.
\notiz{ Note that this definition coincides with the original definition in \cite{CarrGemanMadanYor2002} except for the choice of the truncation function resp. the drift parameter.}
\begin{itemize}
 \item[(i)] A CGMY L\'evy process with parameters $C$, $G$, $M>0$ and $Y\in(0,1)$ and characteristics $\big(Y(M^{Y-1} - G^{Y-1}), 0, F^{CGMY})$ with respect to the truncation function $h(x)=x$ has Sobolev index $Y$.
 \item[(ii)] A CGMY L\'evy process with parameters $C$, $G$, $M>0$ and $Y\in[1,2)$ has Sobolev index $Y$.
\end{itemize}

\end{example}
\par
\begin{proof}
For $Y\in(0,1)$, the assertion follows immediately from the explicit formula of the characteristic exponent of the distribution. Namely for a CGMY process $L$ with characteristics $\big(0,0,F^{CGMY})$ w.r.t. the truncation function $h(x) = x$  we have
\begin{align}\label{CGMY-cumulant}
\begin{split}
\log\big(E\ee{iu L_1}\big) =& \,\,C\Gamma(-Y) \big\{ (M-iu)^Y - M^Y +  Y(M^{Y-1} - G^{Y-1}) iu \\
&+ (G + iu)^Y - G^Y\big\}\,,
\end{split}
\end{align}
where $\Gamma$ denotes the analytic extension of the Gamma function, see \cite{PoirotTankov06}. 
\notiz{Formel und Beweis in \citeN[Theorem 1]{CarrGemanMadanYor2002} sind falsch! Cite Cont Tankov? Sonst Poirot and Tankov Monte Carlo option pricing for tempered stable (CGMY) processes}

For $Y\ge 1$ no explicit formula is available, we therefore examine the density of the L\'evy measure. The following decomposition in a symmetric and an antisymmetric part of the density is valid for any $Y\in(0,2)$,
\notiz{\begin{eqnarray*}
f^{\operatorname{CGMY}}_s(x) = \frac{1}{2} \big( f(x) + f(-x) \big) =
 \left\{
\begin{array}{ll}
\frac{C}{2} \frac{\ee{Gx} + \ee{Mx}}{|x|^{1+Y}} \qquad& \qquad\text{for }x<0 \,,\\[1ex]
\frac{C}{2} \frac{\ee{-Gx} + \ee{-Mx}}{|x|^{1+Y}} \qquad& \qquad\text{for }x\ge 0 \,,
\end{array}
\right.
\end{eqnarray*}
and so it follows}
\begin{eqnarray*}
f^{\operatorname{CGMY}}_s(x) &=& \frac{C}{2} \frac{\ee{-G|x|} + \ee{-M|x|}}{|x|^{1+Y}} = \frac{C}{|x|^{1+Y}} -  C\frac{2-\ee{-G|x|} -\ee{-M|x|}}{2 |x|^{1+Y}} \\
&=&
\frac{C}{|x|^{1+Y}}+
 O\left(\frac{1}{|x|^Y}\right)\quad\text{for }|x|\rightarrow 0\,.
\end{eqnarray*}
Furthermore we have
\notiz{\begin{eqnarray*}
f^{\operatorname{CGMY}}_{as}(x) = \frac{1}{2} \big( f(x) - f(-x) \big) =
 \left\{
\begin{array}{ll}
\frac{C}{2} \frac{\ee{Gx} - \ee{Mx}}{|x|^{1+Y}} \quad& \qquad\text{for }x<0 \,,\\[1ex]
\frac{C}{2} \frac{- \ee{-Gx} + \ee{-Mx}}{|x|^{1+Y}} \quad&\qquad\text{for }x\ge 0 \,,
\end{array}
\right.
\end{eqnarray*}
hence}
\begin{eqnarray*}
 \left|f^{\operatorname{CGMY}}_{as}(x) \right|
&=&
\frac{C}{2} \frac{\left| \ee{-G|x|} - \ee{-M|x|} \right|}{|x|^{1+Y}}
\,=\,
O\left(\frac{1}{|x|^Y}\right)\quad\text{for }|x|\rightarrow 0\,.
\end{eqnarray*}
For $Y\ge1$ we obtain from Proposition \ref{satz-hilfF-sobindex} b) and c) that $L$ has Sobolev index $Y$.
\end{proof}
We conclude this section with the following observation.
\begin{remark}
 Variance gamma (VG) processes are CGMY processes with parameter $Y=0$, and they do not have a Sobolev index.
\end{remark}

\section{Sobolev index of \boldmath{$\alpha$}-semi-stable L\'evy processes}\label{sec-alpha-stable}

Remember that for an $\alpha$-semi-stable L\'evy process $L$ there exists a deterministic function $t\mapsto c(t)$ and $a\!> \!1$ such that
$\left(L_{at}\right)_{t\ge 0}$ coincides with $\left(a^{1/\alpha}L_{t} + c(t)\right)_{t\ge 0}$ in distribution, compare Section 13 in \cite{Sato}.
\notiz{(To give a more precise reference, the assertion follows putting together Definition 13.16, Theorem 13.15, Theorem 13.11, Definition 13.1, Proposition 13.5 and Definition 13.4 in \cite{Sato}.)}

The symbol of a generic real-valued strictly $\alpha$-stable L\'evy process is of the form $A(u)=c|u|^\alpha$ with a constant $c>0$, see \cite[Theorem 14.9]{Sato}. In this case the L\'evy process obviously has Sobolev index $\alpha$.

In this section we show that any $\alpha$-semi-stable L\'evy process with $1<\alpha\le 2$ has Sobolev index $\alpha$. For $\alpha$-semi-stable L\'evy processes with $0<\alpha<1$ we give additional sufficient conditions under which the processes have Sobolev index $\alpha$. Additionally, it turns out that any real-valued strictly $\alpha$-stable L\'evy process has Sobolev index $\alpha$.

Let us give a definition of (semi) stability and $\alpha$-(semi) stability of L\'evy processes in terms of the symbol of the process according to Definition 13.1, Proposition 13.5, Definition 13.16, and Theorem 13.11 in \cite{Sato}. 
\begin{defin}\label{def_stabil}
A L\'evy process with symbol $A$ is called \emph{(semi) stable} if for any $0<a\neq 1$ (for some $0<a\neq 1$) there exists a constant $b>0$ and a vector $c\in\rr^d$ with
\begin{eqnarray}\label{eqn_semistabil}
a A(u) = A( bu) + i\langle c,u\rangle\,\quad\text{for all }u\in\rr^d\,.
\end{eqnarray}
A L\'evy process is called  \emph{$\alpha$-semi-stable}, if it is semi-stable and if
\begin{equation}
a=b^\alpha\quad\text{for all }a\in\Gamma
\end{equation}
with
$\Gamma := \big\{a>0\,\big|\, \exists b>0\,,\, c\in\rr^d\,\text{ s.t. \eqref{eqn_semistabil} is satisfied with $a$, $b$ and $c$}  \,\big\}$.
Accordingly, a L\'evy process is called \emph{$\alpha$-stable}, if it is stable and if
\begin{equation}
a=b^\alpha\quad\text{for all }a\in\Gamma=(0,\infty)\,.
\end{equation}
If $c=0$ in equality \eqref{eqn_semistabil}, the process is called \emph{strictly semi-stable}, \emph{strictly stable} resp. \emph{strictly $\alpha$-(semi) stable}.
\end{defin}
%
%
%
From Definition 13.16, Theorem 13.15 and from Theorem 14.1 in \cite{Sato} we obtain the following remark.
\begin{remark}\label{rem-chartriplet-alpha-stable}
Let $L$ be an $\alpha$-semi-stable L\'evy process with characteristic triplet $(b,\sigma,F)$, with $\sigma\neq 0$ or $F\not\equiv 0$.
 \begin{itemize}
  \item[a)]
We have $0<\alpha\le 2$.
 \item[b)]
We have $\alpha=2$ iff $\sigma\neq 0$ and $F\equiv0$.
\end{itemize}
\end{remark}
Before focusing on the Sobolev index for $\alpha$-semi stable L\'evy processes, we briefly discuss the notion of (non-)degeneracy of L\'evy processes.

According to Definition 24.16 and 24.18 in \cite{Sato}, an $\rrd$-valued L\'evy process $L$ is called \emph{degenerate}, if $P^{L_t}$ is degenerate for any (or equivalently for some) $t>0$, i.e.
\[
S_{P^{L_t}}= \big\{x\in\rrd\big|\, P^{L_t}(G)>0\,\text{for every open subset }G\in\rrd\,\text{with }x\in G \big\}
\]
is contained in some affine subspace of $\rrd$, i.e.
\[
S_{P^{L_t}}\subset y + V
\]
for some $y\in\rrd$ and some linear $(d-1)$-dimensional subspace of $\rrd$.

A L\'evy process that is not degenerate is said to be \emph{nondegenerate}. Note that the definition of degeneracy implies that non-constant real-valued L\'evy processes are nondegenerate. Proposition 24.17 (ii) shows that an $\rrd$-valued L\'evy process is nondegenerate, if its L\'evy measure is nondegenerate or if $\sigma(\rrd)=\{\sigma x|\,x\in\rrd\}$ is not contained in a $(d-1)$-dimensional linear subspace of $\rrd$.

From \cite[Proposition 24.20]{Sato}, the following relation between the Sobolev index and $\alpha$-stability can be deduced.
\begin{proposition}\label{stabil_garding}
Every nondegenerate $\alpha$-semi-stable L\'evy process satisfies the G{\aa}rding condition with index $\alpha$.
\end{proposition}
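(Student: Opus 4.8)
The plan is to translate the G\aa{}rding condition, which asks for a lower bound $\Re(A(\xi)) \ge C_2|\xi|^\alpha - C_3(1+|\xi|^2)^{\beta/2}$, into a statement about the asymptotic behaviour of $\Re(A)$ for large $|\xi|$. By the remark immediately following the definition of the Sobolev index, since $\xi\mapsto A(\xi)$ is continuous for a L\'evy process, the G\aa{}rding condition is equivalent to the existence of $N>0$ and $C_2>0$ with $\Re(A(\xi))\ge C_2|\xi|^\alpha$ for all $|\xi|>N$. So the real task is purely a growth estimate on the real part of the symbol, and the cited \cite[Proposition 24.20]{Sato} should be the engine that delivers exactly such a lower bound.

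First I would recall from \eqref{reAge0} that for a L\'evy process $\Re(A(\xi)) = \tfrac12\langle\xi,\sigma\xi\rangle - \int(\cos(\langle y,\xi\rangle)-1)\,F(\dd y)$, which is manifestly nonnegative, so the question is only whether it grows at least like $|\xi|^\alpha$ away from the origin. The homogeneity built into $\alpha$-semi-stability is the key structural input. By Definition \ref{def_stabil}, for the (or a) scaling factor $a=b^\alpha\in\Gamma$ we have $aA(u)=A(bu)+i\langle c,u\rangle$; taking real parts kills the drift term $i\langle c,u\rangle$, giving the exact scaling relation
\begin{equation*}
\Re\big(A(bu)\big) = b^\alpha\,\Re\big(A(u)\big)\qquad\text{for all }u\in\rr^d.
\end{equation*}
This quasi-homogeneity means $\Re(A)$ on the whole space is controlled by its values on an annulus (or on the unit sphere), and iterating the relation along powers of $b$ propagates any strictly positive lower bound on a shell to all large radii with the correct power $\alpha$. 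The role of nondegeneracy is precisely to guarantee that $\Re(A)$ does not vanish identically on directions where it would need to be positive: Proposition 24.20 in \cite{Sato} is the quantitative positivity statement for nondegenerate (semi-)stable laws that furnishes a strictly positive infimum of $\Re(A)$ over the unit sphere.

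Concretely, the steps I would carry out are: (1) invoke \cite[Proposition 24.20]{Sato} to obtain a constant $c_0>0$ with $\Re(A(u))\ge c_0>0$ for all $u$ on the unit sphere, using nondegeneracy to rule out the vanishing of $\Re(A)$ in any direction; (2) use the quasi-homogeneity $\Re(A(bu))=b^\alpha\Re(A(u))$ to upgrade this sphere bound to $\Re(A(u))\ge c_0|u|^\alpha$ for all $|u|$ large, by writing an arbitrary large $u$ as $b^k$ times a vector in a fixed annulus and comparing powers; (3) conclude that the G\aa{}rding condition with index $\alpha$ holds, appealing to the equivalence in the remark after the definition of the Sobolev index (continuity of $A$ plus a lower bound $C_2|u|^\alpha$ for $|u|>N$). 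The main obstacle I anticipate is step (2) in the genuinely \emph{semi}-stable case: unlike the fully stable case where $\Re(A)$ is exactly homogeneous and one controls all of $\rr^d$ at once, here the scaling relation only holds for the discrete set of ratios $b$ with $b^\alpha\in\Gamma$, so I must cover large $|u|$ by the multiplicative lattice $\{b^k\}$ and show the resulting lower bound is uniform across the gaps in the annulus $[1,b)$. This is exactly the point where the continuity and strict positivity of $\Re(A)$ on a compact annulus (again supplied by nondegeneracy) are needed to take an infimum over the fundamental annulus and transport it by the scaling relation.
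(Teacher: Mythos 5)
Your argument is correct and takes essentially the same route as the paper, whose entire proof consists of the citation of \cite[Proposition 24.20]{Sato}. Note only that Sato's proposition already delivers the full estimate $|\hat{\mu}_t(\xi)|\le \ee{-ct|\xi|^{\alpha}}$ for all large $|\xi|$, i.e. $\Re\big(A(\xi)\big)\ge c\,|\xi|^{\alpha}$ there, so your steps (2) and (3) — strict positivity of $\Re(A)$ on a compact annulus via nondegeneracy, transported by the scaling relation $\Re\big(A(bu)\big)=b^{\alpha}\Re\big(A(u)\big)$ along the multiplicative lattice $\{b^k\}$ — reconstruct the argument \emph{inside} Sato's proof rather than adding anything beyond the citation, and combined with the nonnegativity \eqref{reAge0} and the remark following the definition of the Sobolev index the G{\aa}rding condition follows exactly as you say.
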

\begin{proof}
The assertion follows directly from Proposition 24.20 in \cite{Sato}.
\end{proof}

In view of Proposition \ref{stabil_garding},
 it is enough to study the continuity condition in the sequel.
The following proposition characterises the Sobolev index for $\alpha$-semi-stable L\'evy processes with $\alpha\neq 1$ and for real-valued $1$-stable L\'evy processes.
\begin{proposition}
Let $L$ be a nondegenerate $\alpha$-semi-stable L\'evy process.
\begin{itemize}
 \item[a)]
If $0<\alpha<1$, then $L$ has Sobolev index $\alpha$ iff $L$ is strictly $\alpha$-semi-stable.
\item[b)]
If $1< \alpha\le 2$, then $L$ has Sobolev index $\alpha$.
\item[c)]
The symbol of a real-valued strictly $\alpha$-stable L\'evy process with $\alpha=1$ and Sobolev index $1$ is of the form
$$
A(u) = c|u| + i\tau u
$$
with $c>0$ and $\tau\in\rr$.
\item[d)]
If the process $L$ is real-valued and $\alpha$-stable with $\alpha=1$, then $L$ has Sobolev index $1$ iff $L$ is strictly $1$-stable.
\end{itemize}
\end{proposition}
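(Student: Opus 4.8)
The plan is to reduce everything to the continuity condition. By Proposition~\ref{stabil_garding}, every nondegenerate $\alpha$-semi-stable process already satisfies the G{\aa}rding condition with index $\alpha$, so in each of the four parts it only remains to control $|A(\xi)| \le C_1(1+|\xi|^2)^{\alpha/2}$. Since the symbol $A$ is continuous, it is bounded on compacts, and this bound is automatic for bounded $\xi$; hence the task is purely an asymptotic growth estimate for $|\xi|\to\infty$. For this I would exploit the scaling relation \eqref{eqn_semistabil} directly. Fix $a>1$ in $\Gamma$ (such an element exists, as $\Gamma$ is nontrivial and, rewriting \eqref{eqn_semistabil}, closed under $a\mapsto 1/a$), put $b=a^{1/\alpha}>1$ and take the associated vector $c$, so that \eqref{eqn_semistabil} reads $A(bu)=aA(u)-i\langle c,u\rangle$. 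Iterating $n$ times and writing any $u$ with $|u|$ large as $u=b^{n}w$ with $w$ in the compact annulus $\{1\le|w|\le b\}$ (so that $b^{n}$ is comparable to $|u|$ and $n$ is comparable to $\log_b|u|$) yields
\begin{equation*}
A(b^{n}w)=a^{n}A(w)-i\langle c,w\rangle\,b^{\alpha n-1}\sum_{j=1}^{n}b^{j(1-\alpha)}.
\end{equation*}
As $A$ is bounded on the annulus by some $M$, the first term is of order $b^{\alpha n}=O(|u|^{\alpha})$, and the behaviour of the second term is governed by the sign of $1-\alpha$ through the geometric sum.

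For $1<\alpha\le 2$ the sum $\sum_{j\ge 1}b^{j(1-\alpha)}$ converges, so the second term is likewise $O(b^{\alpha n})=O(|u|^{\alpha})$ and the continuity condition holds with index $\alpha$; this proves b). For $0<\alpha<1$ the sum behaves like $b^{n(1-\alpha)}$, making the second term of order $|\langle c,w\rangle|\,|u|$, which dominates the first. If the process is strictly $\alpha$-semi-stable then $c=0$, the second term vanishes and $|A(u)|\le M|u|^{\alpha}$, giving Sobolev index $\alpha$. Conversely, if the index is $\alpha$, I would pass to the imaginary part, $\Im A(b^{n}w)=b^{\alpha n}\Im A(w)-\langle c,w\rangle\,b^{\alpha n-1}\sum_{j}b^{j(1-\alpha)}$, which is bounded below by a positive multiple of $|\langle c,w\rangle|\,|u|$ whenever $\langle c,w\rangle\neq0$; the continuity bound $|A(u)|\le C_1|u|^{\alpha}$ with $\alpha<1$ then forces $\langle c,w\rangle=0$ for every direction $w$, hence $c=0$, i.e. strict semi-stability. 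This settles a).

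For c), strict $1$-stability means $c=0$ and $b=a$ for every $a\in\Gamma=(0,\infty)$, so $A$ is positively homogeneous of degree one, $A(au)=aA(u)$ for all $a>0$. On $\rr$ this determines $A$ by its values at $\pm1$, and combining homogeneity with the Hermitian symmetry $A(u)=\overline{A(-u)}$ and $\Re A\ge0$ from \eqref{reAge0} gives $A(u)=c|u|+i\tau u$ with $c\ge0$, $\tau\in\rr$; nondegeneracy (equivalently the G{\aa}rding part of index $1$) forces $c>0$. For d), one direction is then immediate: strict $1$-stability gives, as in c), the form $A(u)=c|u|+i\tau u$ with $c>0$, which visibly satisfies both conditions with index $1$. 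For the converse I would repeat the iteration with $\alpha=1$: now $\sum_{j=1}^{n}b^{j(1-\alpha)}=n$, so $\Im A(b^{n}w)=b^{n}\Im A(w)-\langle c,w\rangle\,n\,b^{n-1}$ grows like $|u|\log|u|$ whenever $c\neq0$, violating $|A(u)|\le C_1(1+|u|)$; hence index $1$ forces $c=0$, i.e. strict $1$-stability.

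The step I expect to be most delicate is the two converse implications (the ``only if'' in a) and d)): there one needs a genuine \emph{lower} bound on $|A|$ and must exclude cancellation between the homogeneous term and the drift-cocycle term. This is precisely what passing to $\Im A$ accomplishes, since it isolates the $c$-contribution and exhibits that the two contributions grow at strictly different rates ($|u|^{\alpha}$ versus $|u|$ in part a), and $|u|$ versus $|u|\log|u|$ in part d)), so no cancellation can occur. A minor but necessary point is to confirm that one may indeed choose $a>1$ in $\Gamma$ and that the annulus representatives $w$ range over all directions, which is what upgrades $\langle c,w\rangle=0$ to $c=0$.
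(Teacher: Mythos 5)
Your proof is correct, and it takes a genuinely different route from the paper. The paper imports Sato's structure theory: for $\alpha\neq 1$ it quotes Proposition 14.9 to write $A(u)=|u|^\alpha\big(\eta(u)+i\gamma_\alpha(u)\big)+i\langle c_\alpha,u\rangle$ with $\eta,\gamma_\alpha$ bounded, so that the continuity condition reduces to whether the linear term $\langle c_\alpha,u\rangle$ is dominated by $|u|^\alpha$; the equivalence with strictness in a) is then quoted from Sato's Theorem 14.7\,(i), and parts c), d) are read off the explicit representation $A(u)=c|u|\big(1-i\beta\tfrac{2}{\pi}\tfrac{u}{|u|}\log|u|\big)+i\tau u$ of Sato's Theorem 14.15, where $\beta=0$ characterises strictness. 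You instead extract all growth estimates directly by iterating the cocycle relation $A(bu)=aA(u)-i\langle c,u\rangle$ from \eqref{eqn_semistabil}: the trichotomy of your geometric sum (convergent for $\alpha>1$, of order $b^{n(1-\alpha)}$ for $\alpha<1$, equal to $n$ for $\alpha=1$) reproduces exactly the dominating terms $|u|^{\alpha}$, $|u|$ and $|u|\log|u|$ that the paper reads off Sato's representations, and passing to $\Im A$ to isolate the drift-cocycle contribution plays precisely the role that isolating $\langle c_\alpha,u\rangle$, resp.\ the $\log$-term, plays in the paper. Your derivation of c) from degree-one homogeneity together with $A(u)=\overline{A(-u)}$ and $\Re A\ge 0$ from \eqref{reAge0} is likewise more elementary than citing equation (14.25) of Sato. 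Both arguments delegate the G{\aa}rding condition to Proposition \ref{stabil_garding}, so the comparison is a fair one; the paper's route buys brevity and the precise structural constants, yours buys self-containedness at the cost of re-proving the needed fragment of the structure theory.

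Three small points you should make explicit. First, you bound $A$ on the annulus, which uses continuity of the symbol; the paper only records Borel measurability, but continuity of the characteristic exponent is standard and deserves a sentence. Second, in d) strict $1$-stability in the sense of Definition \ref{def_stabil} requires $c=0$ in \eqref{eqn_semistabil} for \emph{every} $a\in\Gamma=(0,\infty)$, whereas your contradiction is run at one fixed $a>1$; this is harmless because the argument applies verbatim to each $a>1$, and the inversion relation $a\mapsto 1/a$ you already recorded transports $c_a=0$ to $c_{1/a}=0$ (in the semi-stable case a), one $a\neq 1$ suffices by definition, so no fix is needed there). Third, in the forward direction of d) you assert $c>0$ ``as in c)'', but in c) positivity came from the \emph{assumed} G{\aa}rding condition; here it is not needed at all, since the continuity condition for $A(u)=c|u|+i\tau u$ holds regardless of the sign of $c$ and the G{\aa}rding condition is supplied by nondegeneracy via Proposition \ref{stabil_garding}.
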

\begin{proof}
 In view of Remark \ref{rem-chartriplet-alpha-stable} the assertion is obvious for $\alpha=2$.
\par
 For $0<\alpha<2$ with $\alpha\neq 1$, Proposition 14.9 in \citeN{Sato} shows that the symbol $A=-\log(\muhut)$ is of the form
$$
A(u) = |u|^\alpha \big( \eta(u) + i\gamma_\alpha(u)\big) + i \langle c_\alpha,u\rangle
$$
 with $c_\alpha\in \rr^d$, $u\mapsto \eta(u)$ nonnegative, continuous on $\rrd\setminus \{0\}$ and $\eta(bu) = \eta(u)$ for all $u\in\rr^d$, and $\gamma_\alpha$ real-valued, continuous on $\rr^d\setminus \{0\}$ with $\gamma_\alpha(bu) = \gamma_\alpha(u)$ for all $u\in\rr^d$ with $b=a^{1/\alpha}>1$. Basic arguments show that the mappings $u\mapsto \eta(u)$ and $u\mapsto \gamma_\alpha(u)$ are bounded.
 %
We therefore have $\Re\left( A(u)\right) = |u|^\alpha \eta(u)$, where $\eta$ is bounded, and hence $\left|\Re\left( A(u)\right)\right| \le C |u|^\alpha$. In view of Proposition \ref{stabil_garding} it remains to derive an adequate upper bound of the imaginary part.
\par
For $0<\alpha<2$, $\alpha\neq1$ we have
$$
\Im\big(A(u)\big) = |u|^\alpha \gamma_\alpha(u) + \langle c_\alpha, u\rangle
$$
with the bounded function $\gamma_\alpha$.

For $0<\alpha<1$, this shows that $\left|\Im\big(A(u)\big) \right|\le C\left(1+|u|^\alpha\right)$ iff $c_\alpha = 0$. 
According to \cite[Theorem 14.7 (i)]{Sato} the latter is the case if and only if the distribution resp. the L\'evy process is strictly $\alpha$-semi-stable.
\par
For $1<\alpha<2$, due to
$$
\big|\langle c_\alpha, u\rangle\big| \le |c_\alpha||u| \le |c_\alpha|\big(1+|u|^\alpha\big),
$$
we obtain  $\left|\Im\big(A(u)\big) \right|\le C\left(1+|u|^\alpha\right)$ without further restrictions.
\par 
Assertion c) and d) for $\alpha=1$ are a direct consequence of Theorem 14.15, equation (14.25) in \cite{Sato}, that states that the symbol of a real-valued non-trivial (i.e. a non-constant) $1$-stable L\'evy process is
 the form
$$
A(u) = c|u| \left( 1- i\beta\frac{2}{\pi} \frac{u}{|u|} \log|u| \right) + i\tau u
$$
with $c>0$, $\beta\in[-1,1]$ and $\tau\in\rr$\label{A-stabiler-1d-Levy}.
From this representation of the symbol we can read that $L$ is strictly $1$-stable iff $\beta=0$. The representation given in assertion c) follows as well.
\end{proof}

\section{Connections with the Blumenthal-Getoor index}

The index $\beta$, called Blumenthal-Getoor index, quantifies the intensity of small jumps of a L\'evy process. It is defined for every L\'evy process, whereas not every L\'evy process has a Sobolev index. In this section we show for real-valued L\'evy processes that if they have a Sobolev index $Y<2$, then this index is bigger or equal to the Blumenthal-Getoor index.

The following definition of the Blumenthal-Getoor index is taken from \citeN[p. 362]{Sato}.
\begin{defin}\label{blumenindex}
Let $L$ be a L\'evy process with characteristics $(b,c,F)$. Then
\begin{eqnarray*}
 \beta := \inf\left\{ \alpha>0 \,\Big|\,\int_{[-1,1]} |x|^\alpha F(\dd x) <\infty  \right\}
\end{eqnarray*}
is called the \emph{Blumenthal-Getoor index} of the process.
\end{defin}
\noindent It is well known that the Blumenthal-Getoor index is related to path properties of the L\'evy process.  Theorem 21.9 in \citeN{Sato}\zitep{see also Definition 11.9} shows the following relationship between the Blumenthal-Getoor index and the variation of the paths of the L\'evy process.
\begin{proposition}
Let $L$ be a L\'evy process without Brownian part with characteristics $(b,0,F)$ and  Blumenthal-Getoor index $\beta$.
\begin{itemize}
  \item [(a)]
If $\beta<1$, then $P$-a.e. path of $L$ is of bounded variation on $(0,t]$ for every $t>0$.
\item[(b)]
If $\beta>1$, then $P$-a.e. path of $L$ is of unbounded variation on $(0,t]$ for every $t>0$.
\item[(c)]
If $\beta=1$, then we have the following two cases.
\begin{itemize}
\item[(c1)]
If $\int_{-1}^1 |x|F(\dd x) <\infty$, then $P$-a.e. path of $L$ is of bounded variation on $(0,t]$ for every $t>0$.
\item[(c2)]
If $\int_{-1}^1 |x|F(\dd x) = \infty$, then $P$-a.e. path of $L$ is of unbounded variation on $(0,t]$ for every $t>0$.
 \end{itemize}
\end{itemize}
\end{proposition}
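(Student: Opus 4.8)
The plan is to reduce the four assertions to a single classical dichotomy---the variation criterion for purely non-Gaussian L\'evy processes---and then to carry out elementary index bookkeeping using the monotonicity of $x\mapsto|x|^\alpha$ on the unit interval. The genuinely analytic content lies entirely in the variation criterion, which I would invoke rather than reprove.

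First I would recall the \emph{variation criterion}: for a L\'evy process with characteristics $(b,0,F)$ (no Brownian part), $P$-almost every path is of bounded variation on $(0,t]$ for every $t>0$ if and only if $\int_{[-1,1]}|x|\,F(\dd x)<\infty$, and otherwise $P$-almost every path is of unbounded variation on $(0,t]$ for every $t>0$. This is precisely Theorem 21.9 in \citeN{Sato}. Its proof rests on the L\'evy-It\^o decomposition: the large-jump part is a compound Poisson process (finitely many jumps on compacts, hence locally of finite variation), so the question reduces to the compensated sum of small jumps, whose paths are of finite variation exactly when the small jumps are absolutely summable, i.e. when $\int_{[-1,1]}|x|\,F(\dd x)<\infty$. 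Granting this criterion, each of the four assertions becomes a statement about when this single integral is finite.

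For the comparison, note that on $[-1,1]$ we have $|x|\le|x|^\alpha$ whenever $0<\alpha<1$ and $|x|^\alpha\le|x|$ whenever $\alpha>1$. In case (a), since $\beta<1$, I would pick $\alpha$ with $\beta<\alpha<1$; the definition of $\beta$ as an infimum gives $\int_{[-1,1]}|x|^\alpha\,F(\dd x)<\infty$, and the comparison yields $\int_{[-1,1]}|x|\,F(\dd x)\le\int_{[-1,1]}|x|^\alpha\,F(\dd x)<\infty$, so the variation criterion gives bounded variation. In case (b), since $\beta>1$, I would pick $\alpha$ with $1<\alpha<\beta$; as $\alpha$ lies strictly below the infimum, $\int_{[-1,1]}|x|^\alpha\,F(\dd x)=\infty$, and the comparison gives $\int_{[-1,1]}|x|\,F(\dd x)\ge\int_{[-1,1]}|x|^\alpha\,F(\dd x)=\infty$, so the criterion gives unbounded variation.

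Finally, in case (c) the Blumenthal-Getoor index equals $1$, which is exactly the exponent entering the variation criterion; hence finiteness of $\int_{[-1,1]}|x|\,F(\dd x)$ is not determined by the index alone, and the two subcases must be settled by direct hypothesis. Subcase (c1) assumes $\int_{-1}^1|x|\,F(\dd x)<\infty$ and subcase (c2) assumes the integral diverges, so in each the variation criterion applies immediately and delivers bounded, respectively unbounded, variation. I expect the only real obstacle to be the variation criterion itself; the index comparison is routine, so I would state the criterion with a reference to \citeN{Sato} and devote the argument to the bookkeeping above.
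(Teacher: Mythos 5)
Your proposal is correct and follows exactly the route the paper takes: the paper offers no independent proof, but simply attributes the proposition to Theorem 21.9 of \citeN{Sato} (the variation criterion $\int_{[-1,1]}|x|\,F(\dd x)<\infty$ for processes without Brownian part), leaving the index bookkeeping implicit. Your comparison arguments for $\beta<1$ and $\beta>1$ (choosing $\alpha$ strictly between $\beta$ and $1$ and using the monotonicity of $|x|^\alpha$ on $[-1,1]$ together with the infimum definition of $\beta$) are sound and correctly fill in the routine details the paper omits.
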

\noindent In \citeN{HudsonMason76} and the references therein this assertion is generalised for the so-called $p$-variation. 
In \citeN{Woerner07} a \emph{normed} $p$-variation is introduced and for the time-changed processes a relation to the Blumenthal-Getoor index is derived.
\par
In particular, for L\'evy processes $L$ it is shown under some assumptions on the characteristic triplet, that the normed $p$-variation for $0<p<\beta$ with $p\neq\beta-1$, exists on a finite time interval $[0,T]$,
\begin{align}\label{normed-pvar}
\Delta_n^{1 - p/\beta} \sum_{i=1}^{n-1} \big|L_{(i+1)\Delta_n}  - L_{i\Delta_n}\big|^{p} \xrightarrow{P}{} V_p(L)\,,
\end{align}
where $V_p(L)$ is a finite number, $\beta$ is the Blumenthal-Getoor index, and for $n\to \infty$ the partition of the time interval $[0,T]$ refines uniformly, $\Delta_n\downarrow 0$, see \cite[Theorem 1]{Woerner07} and \cite[Corollary 1]{Woerner03}. The additional assumptions made therein, concern the L\'evy-measure, that is assumed to have a density with a certain Taylor expansion around the origin, and a special choice of the drift.
\par
In order to compare the Sobolev index with the Blumenthal Getoor index, we introduce another index $\gamma$ that, similar to the Blumenthal-Getoor index, quantifies the intensity of small jumps of the process.
\begin{lemma}\label{levy-index}
Let $L$ be a L\'evy process with characteristics $(b,c,F)$ and with Blumenthal-Getoor index $\beta$. We define the index
\begin{eqnarray*}
 \gamma := \sup\left\{ \alpha>0 \,\Big|\, \liminf_{r\downarrow 0} r^{\alpha-2} \int_{[-r,r]} |x|^2 F(\dd x) >0 \right\}\,.
\end{eqnarray*}
We have
$$
\beta\ge \gamma\,.
$$
\end{lemma}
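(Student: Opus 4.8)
The plan is to establish $\beta\ge\gamma$ by showing that the set
\[
S:=\Big\{\alpha>0 \,\Big|\, \liminf_{r\downarrow 0} r^{\alpha-2}\!\int_{[-r,r]}|x|^2 F(\dd x)>0\Big\}
\]
defining $\gamma$ is contained in $(0,\beta]$; taking the supremum then yields $\gamma=\sup S\le\beta$. To this end I would fix an arbitrary $\alpha\in S$ and argue that necessarily $\alpha\le\beta$.

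First I would record two preliminary observations. Since $F$ is a L\'evy measure, $\int_{[-1,1]}|x|^2 F(\dd x)<\infty$, so $\int_{[-r,r]}|x|^2 F(\dd x)$ stays bounded and, because $F(\{0\})=0$, tends to $0$ as $r\downarrow 0$. Consequently, for $\alpha\ge 2$ the factor $r^{\alpha-2}$ is bounded while the product $r^{\alpha-2}\int_{[-r,r]}|x|^2 F(\dd x)$ tends to $0$, so its liminf vanishes; hence $S\subseteq(0,2)$ and it suffices to treat $0<\alpha<2$. Second, by the definition of $\beta$ as an infimum, together with the fact that on $[-1,1]$ larger exponents give smaller integrands (so the set $\{\alpha>0\,:\,\int_{[-1,1]}|x|^\alpha F(\dd x)<\infty\}$ is an up-set), every $\alpha>\beta$ satisfies $\int_{[-1,1]}|x|^\alpha F(\dd x)<\infty$.

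The crux is then an elementary pointwise bound: for $0<\alpha<2$ and $|x|\le r\le 1$ one has $|x|^2=|x|^\alpha|x|^{2-\alpha}\le r^{2-\alpha}|x|^\alpha$, because $2-\alpha>0$. Integrating over $[-r,r]$ against $F$ and multiplying by the positive factor $r^{\alpha-2}$ gives
\[
r^{\alpha-2}\int_{[-r,r]}|x|^2 F(\dd x)\le \int_{[-r,r]}|x|^\alpha F(\dd x)\qquad(0<r\le 1).
\]
Now suppose $\alpha\in S$ with $0<\alpha<2$ but $\alpha>\beta$. By the second observation $\int_{[-1,1]}|x|^\alpha F(\dd x)<\infty$, whence dominated convergence (using $F(\{0\})=0$) gives $\int_{[-r,r]}|x|^\alpha F(\dd x)\to 0$ as $r\downarrow 0$. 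The displayed inequality then forces $\liminf_{r\downarrow 0} r^{\alpha-2}\int_{[-r,r]}|x|^2 F(\dd x)=0$, contradicting $\alpha\in S$. Hence every element of $S$ is $\le\beta$, and $\gamma\le\beta$ follows.

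I do not anticipate a serious obstacle: the whole argument rests on the splitting $|x|^2\le r^{2-\alpha}|x|^\alpha$ and the definition of the Blumenthal-Getoor index. The only points demanding care are the separate disposal of the range $\alpha\ge 2$ (needed so that the exponent $2-\alpha$ in the splitting stays positive) and the vanishing of $\int_{[-r,r]}|x|^\alpha F(\dd x)$ as $r\downarrow 0$, which relies precisely on the finiteness guaranteed by $\alpha>\beta$ and on $F(\{0\})=0$.
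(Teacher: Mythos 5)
Your proof is correct and takes essentially the same route as the paper: the identical pointwise splitting $|x|^2 = |x|^\alpha|x|^{2-\alpha} \le r^{2-\alpha}|x|^\alpha$ on $[-r,r]$, followed by the observation that finiteness of $\int_{[-1,1]}|x|^\alpha F(\dd x)$ together with $F(\{0\})=0$ would force the liminf to vanish. Your contrapositive packaging and the explicit exclusion of $\alpha\ge 2$ are only presentational differences from the paper, which argues directly that any $\alpha$ in the defining set yields $\int_{-1}^{1}|x|^\alpha F(\dd x)=\infty$ and hence $\alpha\le\beta$.
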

\begin{proof}
For $0<\alpha<2$ and $0<r<1$ we have $ r^{\alpha-2} \int_{-r}^r |x|^2 F(\dd x) \le  \int_{-r}^r |x|^{\alpha} F(\dd x) $, since
\begin{eqnarray*}
\int_{-r}^r |x|^2 r^{\alpha-2} F(\dd x) 
\,\le\,
\int_{-r}^r  |x|^{\alpha} F(\dd x) \,.
\end{eqnarray*}
If
$$
 \liminf_{r\downarrow 0} r^{\alpha-2} \int_{-r}^r |x|^2 F(\dd x) >0\,,
$$
then there exists a constant $C>0$ with $\int_{-r}^r |x|^{\alpha} F(\dd x) > C$ for all $0<r$ smaller than some $\epsilon>0$. Hence $\int_{-1}^1|x|^{\alpha} F(\dd x) =\infty$ follows from $F(\{0\}) = 0$. This means that for every $\alpha<\gamma$ we have $\alpha\le \beta$ whence $\gamma \le \beta$.
\notiz{
Note that $\gamma = \inf\left\{ \alpha>0 \,\Big|\, \liminf_{r\downarrow 0} r^{\alpha-2} \int_{[-r,r]} x^2 F(\dd x)  = 0 \right\}$. 
\par
Conversely $\int_{-1}^1|x|^{\alpha} F(\dd x) <\infty$ entails 
$$
\liminf_{r\downarrow 0} \int_{-r}^r |x|^{\alpha} F(\dd x) =0
$$
whence $\liminf_{r\downarrow 0} r^{\alpha-2} \int_{[-r,r]} x^2 F(\dd x)  = 0$.
}
\end{proof}
\noindent The index $\gamma$ quantifies the intensity of small jumps of the L\'evy process, hence it is also a measure for the regularity of the underlying distribution. \citeN[Proposition 28.3]{Sato} shows the following remark.
\begin{remark}
If we have $0<\gamma<2$, then the distribution $\mu_1=P^{L_1}$ does possess a smooth Lebesgue density.
\end{remark}
\noindent The rest of the section is dedicated to the relation between the index $\gamma$, the Blumenthal Getoor-index, and the Sobolev index of a real-valued L\'evy process. We therefore restrict ourselves to L\'evy processes that take values in $\rr$.
\begin{proposition}\label{lem-blumenthal-getoor}
Let  $L$ be a real-valued L\'evy process with characteristic triplet $(b,0,F)$.
If the index $\gamma$ satisfies $\gamma\in(0,2)$, then the symbol $A$ of $L$ satisfies a G{\aa}rding-condition for any index $\alpha<\gamma$.
\end{proposition}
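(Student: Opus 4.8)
The plan is to prove the G{\aa}rding condition in its equivalent high-frequency form. Since $u\mapsto A(u)$ is continuous, the reformulation noted right after the definition of the Sobolev index reduces the claim to exhibiting a constant $C_2>0$ and a threshold $N>0$ with $\Re(A(u))\ge C_2|u|^\alpha$ for all $|u|>N$. As $L$ is real-valued with characteristic triplet $(b,0,F)$, formula \eqref{reAge0} collapses to
\begin{align*}
\Re\big(A(u)\big)=\int_{\rr}\big(1-\cos(ux)\big)\,F(\dd x)\ge 0,
\end{align*}
and because the integrand is nonnegative I am free to discard the contribution of $\{|x|>r\}$ and keep only the small jumps, which is exactly the regime quantified by $\gamma$.

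The heart of the argument is a scaling step. Using the elementary bound $1-\cos t\ge \frac{2}{\pi^2}t^2$ for $|t|\le\pi$ (Jordan's inequality) and restricting the integral to $\{|x|\le \pi/|u|\}$, where $|ux|\le\pi$, I get
\begin{align*}
\Re\big(A(u)\big)\ge \frac{2}{\pi^2}\,u^2\int_{[-\pi/|u|,\,\pi/|u|]}x^2\,F(\dd x).
\end{align*}
Substituting $r=\pi/|u|$, so that $u^2=\pi^2/r^2$, turns this into $\Re(A(u))\ge \frac{2}{r^2}\int_{[-r,r]}x^2\,F(\dd x)$, which is precisely twice the truncated second moment whose decay rate defines $\gamma$ in Lemma \ref{levy-index}. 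This substitution, which converts the small-$x$ concentration of $F$ into large-$u$ growth of the symbol, is what makes the proof work.

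It then remains to translate the definition of $\gamma$ into a usable lower bound. Fix $\alpha<\gamma$. For each fixed $r\in(0,1)$ the map $\alpha\mapsto r^{\alpha-2}\int_{[-r,r]}x^2F(\dd x)$ is non-increasing (as $r^{\alpha-2}$ is), hence so is its $\liminf$ as $r\downarrow 0$; together with $\gamma$ being the supremum of the set on which this $\liminf$ is strictly positive, this forces $\liminf_{r\downarrow 0} r^{\alpha-2}\int_{[-r,r]}x^2F(\dd x)>0$. Consequently there are $c>0$ and $r_0\in(0,1)$ with $\int_{[-r,r]}x^2F(\dd x)\ge c\,r^{2-\alpha}$ for all $0<r<r_0$, and inserting this into the previous estimate yields, for all $|u|>N:=\pi/r_0$,
\begin{align*}
\Re\big(A(u)\big)\ge \frac{2}{r^2}\,c\,r^{2-\alpha}=2c\,r^{-\alpha}=\frac{2c}{\pi^{\alpha}}\,|u|^{\alpha},
\end{align*}
which is the desired bound with $C_2=2c/\pi^{\alpha}>0$.

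I expect no serious obstacle here; the only delicate point is the implication $\alpha<\gamma\Rightarrow\liminf>0$, which rests on the monotonicity of the $\liminf$ in $\alpha$ and crucially on $\alpha$ being chosen \emph{strictly} below the supremum $\gamma$, so that strict positivity is inherited from an admissible exponent lying between $\alpha$ and $\gamma$. Everything else is a clean application of the scaling $r=\pi/|u|$ and of continuity of the symbol.
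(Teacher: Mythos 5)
Your proof is correct and takes essentially the same approach as the paper's: both arguments bound $1-\cos(ux)$ from below by a multiple of $u^2x^2$ on the set $\{|ux|\le\pi\}$ (the paper via $1-\cos t = 2\sin^2(t/2)$ and Jordan's inequality, following Sato's Proposition 28.3), discard the large-jump contribution, and use the substitution $r=\pi/|u|$ together with the estimate $\int_{[-r,r]}x^2\,F(\dd x)\ge c\,r^{2-\alpha}$ for small $r$, which is exactly how the paper exploits $\alpha<\gamma$. The only cosmetic difference lies in the conclusion: you invoke the continuity reformulation of the G{\aa}rding condition (and usefully spell out the monotonicity argument showing the $\liminf$ is positive for every $\alpha<\gamma$), whereas the paper uses nonnegativity of $\Re(A)$ to pass directly from $\Re(A(\xi))\ge c|\xi|^\alpha$ for $|\xi|>N$ to $\Re(A(\xi))\ge c|\xi|^\alpha - cN^\alpha$ for all $\xi$.
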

\begin{proof}
Let us write down the real part of the symbol,
\begin{eqnarray*}
 \Re \big( A(\xi) \big)
=
 \int \big(1- \cos(\xi y) \big) F(\dd y)\,.
\end{eqnarray*}
As in the proof of \citeN[Proposition 28.3]{Sato} we further conclude
\begin{eqnarray}\label{sato_hilft}
\lefteqn{\int(1-\cos(vy))F(\dd y)}\qquad\nonumber\\
&=&
2\int_{|v||y|\le\pi}\sin^2\Big(\frac{vy}{2}\Big)F(\dd y)+2\int_{|v||y|>\pi}\sin^2\Big(\frac{vy}{2}\Big)F(\dd y)\nonumber\\
&\ge&
2\int_{|v||y|\le\pi}\frac{2}{\pi^2}v^2y^2F(\dd y)+2\int_{|v||y|>\pi}\sin^2\Big(\frac{vy}{2}\Big)F(\dd y)\nonumber\\
&\ge&
c'\int_{|v||y|\le\pi}v^2y^2F(\dd y)
\end{eqnarray}
for a positive constant $c'$. On the other hand, for $\alpha < \gamma$ we have
\begin{eqnarray*}
\liminf_{r\downarrow 0} r^{\alpha-2} \int_{[-r,r]} x^2 F(\dd x) >0 \,,
\end{eqnarray*}
hence there exists a constant $c_1>0$ and $\epsilon >0$, such that
\begin{eqnarray*}
\int_{[-r,r]} x^2 F(\dd x) \ge c_1 r^{2-\alpha} \qquad\text{for every } r<\epsilon \,.
\end{eqnarray*}
That is, for every $\alpha<\gamma$ there exists an  $N>0$ and a constant $c_c>0$ with
$$
\int_{|v||y|\le\pi}v^2y^2F(\dd y)\ge c_c|v|^\alpha\qquad\hbox{for all }v\,\hbox{with }|v|>N\,.
$$
Altogether, we have
\begin{eqnarray*}
 \Re \big( A(\xi) \big)
\ge
 c|\xi|^\alpha \1_{|\xi| >N}
\ge
 c|\xi|^\alpha - c_2
\end{eqnarray*}
with $c>0$ and $c_2 = c|N|^\alpha$.
\end{proof}
\begin{proposition}\label{satz-momenteundblumenthal}
Let $L$ be a real-valued L\'evy process with characteristics $(b,0,F)$, and s.t. its symbol satisfies the G{\aa}rding-condition with $0<Y<2$. Then
$$
\int_{-1}^1 |x|^\alpha F(\dd x) = \infty\quad\text{for all }\alpha< Y<2\,.
$$
In particular the Blumenthal-Getoor index $\beta$ of the process is bigger or equal to $Y$ ($\beta \ge Y$).
\end{proposition}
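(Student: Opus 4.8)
The plan is to argue by contraposition: I would show that if $\int_{-1}^1 |x|^\alpha\, F(\dd x) < \infty$ for some $\alpha < Y$, then the real part of the symbol can grow at most like $|\xi|^\alpha$, contradicting the G{\aa}rding condition of index $Y$. Since $L$ is real-valued with $\sigma = 0$, the starting point is the representation (the one-dimensional, $\sigma=0$ specialisation of \eqref{reAge0})
$$
\Re\big(A(\xi)\big) = \int_{\rr} \big(1 - \cos(\xi y)\big)\, F(\dd y)\,.
$$
Because the symbol of a real-valued L\'evy process is continuous, I would use the reformulated G{\aa}rding condition noted after the definition of the Sobolev index: there is an $N>0$ with $\Re(A(\xi)) \ge C_2 |\xi|^Y$ for all $|\xi| > N$. (The lower-order correction term $C_3(1+|\xi|^2)^{\beta'/2}$ with $\beta' < Y$ is absorbed here, which is harmless precisely because $\beta' < Y$.)

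The key elementary estimate I would establish is that for each fixed $\alpha \in (0,2)$ there is a constant $C_\alpha > 0$ with $1 - \cos(t) \le C_\alpha |t|^\alpha$ for all $t \in \rr$. This holds because $(1-\cos t)/|t|^\alpha$ is continuous on $(0,\infty)$, tends to $0$ as $t\to 0$ (here $\alpha < 2$ is essential, since $1-\cos t \sim t^2/2$) and tends to $0$ as $t\to\infty$ (since $1-\cos t \le 2$), so it is bounded. With this I would split the integral at $|y|=1$: on $\{|y|>1\}$ the integrand is bounded by $2$ and $F(\{|y|>1\}) < \infty$ as $F$ is a L\'evy measure, contributing a constant; on $\{|y|\le 1\}$ I bound $1 - \cos(\xi y) \le C_\alpha |\xi|^\alpha |y|^\alpha$ and factor out $|\xi|^\alpha$, obtaining
$$
\Re\big(A(\xi)\big) \le C_\alpha |\xi|^\alpha \int_{-1}^1 |y|^\alpha\, F(\dd y) + 2\,F\big(\{|y|>1\}\big)\,.
$$
Under the assumed finiteness of the $\alpha$-moment this reads $\Re(A(\xi)) \le C' |\xi|^\alpha + C''$.

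Combining the two bounds for $|\xi| > N$ and dividing by $|\xi|^\alpha$ gives $C_2\, |\xi|^{Y-\alpha} \le C' + C'' |\xi|^{-\alpha}$; since $Y - \alpha > 0$ the left-hand side diverges as $|\xi|\to\infty$ while the right-hand side stays bounded, a contradiction. This proves $\int_{-1}^1 |x|^\alpha\, F(\dd x) = \infty$ for every $\alpha < Y$. For the final assertion, I would simply invoke Definition \ref{blumenindex}: the Blumenthal-Getoor index $\beta$ is the infimum of the exponents $\alpha>0$ for which $\int_{[-1,1]}|x|^\alpha F(\dd x)$ is finite; since no $\alpha < Y$ lies in that set, its infimum satisfies $\beta \ge Y$.

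I expect the proof to be essentially routine, so the ``hard part'' is a soft one: the uniform inequality $1-\cos t \le C_\alpha |t|^\alpha$, where the restriction $\alpha < 2$ is exactly what guarantees the bound near $t=0$, together with the clean separation of the leading $|\xi|^Y$ behaviour from the subleading G{\aa}rding correction. Everything else is a bookkeeping comparison of polynomial growth rates.
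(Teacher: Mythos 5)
Your proposal is correct and follows essentially the same route as the paper's proof: both rest on the elementary bound $1-\cos t \le C(\alpha)\,|t|^\alpha$ for $0<\alpha<2$ (which the paper invokes without proof and you justify), split off the large-jump part of $F$ as a bounded contribution, and obtain a contradiction between the G{\aa}rding lower bound of order $|\xi|^Y$ and the resulting upper bound of order $|\xi|^\alpha$ by letting $|\xi|\to\infty$. The only cosmetic differences are that the paper truncates the integral at an arbitrary $\epsilon>0$, thereby concluding $\int_{-\epsilon}^{\epsilon}|x|^\alpha F(\dd x)=\infty$ for every $\epsilon$, and phrases the comparison directly rather than by contraposition.
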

\begin{proof}
From the assumption we know that there exist constants $C_1>0$ and $C_2\ge 0$ and indexes $0<Y'<Y<2$ with
$$
\int \big(1-\cos( u x) \big) F(\dd x) \ge C_1|u|^Y - C_2\left(1+|u|^{Y'}\right)\,.
$$
Thus for every $\epsilon>0$, the inequality
$$
\int_{-\epsilon}^\epsilon \big(1-\cos (u x)) \big) F(\dd x) \ge C_1|u|^Y - C_2|u|^{Y'} - C_\epsilon
$$
holds for $C_\epsilon = C_2 + 2F\big((-\epsilon,\epsilon)^c\big)$. Since for every $0<\alpha< 2$ there exists a constant $C(\alpha)>0$ with $1-\cos(y) \le C(\alpha) |y|^\alpha$ for all $y\in \rr$, we are able to conclude for any fixed $\epsilon>0$ that
$$
C_1|u|^Y - C_2|u|^{Y'} - C_\epsilon \le \int_{-\epsilon}^\epsilon \big(1-\cos( u x ) \big) F(\dd x)  \le C(\alpha)  \int_{-\epsilon}^\epsilon | u x |^\alpha F(\dd x)
$$
for all $u\in \rr$, resp.
$$
\frac{C_1}{C(\alpha)}|u|^{Y-\alpha} - \frac{C_2}{C(\alpha)}|u|^{Y'-\alpha} - \frac{C_\epsilon}{C(\alpha)} |u|^{-\alpha}\le \int_{-\epsilon}^\epsilon |x|^\alpha F(\dd x)\quad\text{for all }u\in \rr\setminus\{0\} \,.
$$
For every $\epsilon>0$ the left hand side of the inequality diverges for $|u|\to \infty$, if $\alpha< Y$. Thus we can conclude $\infty = \int_{-\epsilon}^{\epsilon} |x|^\alpha F(\dd x)$ for every $\epsilon>0$.
\end{proof}
\noindent Proposition \ref{satz-momenteundblumenthal} and Proposition \ref{Gard-folgtDichte} together yield the following result.
\begin{remark}\label{rem-rel-sob-blum}
The relation between both indexes, the Blumenthal-Getoor and the Sobolev index, bridges the path properties and the distribution of the process.
If the L\'evy process has a Sobolev index, its distribution is smooth. Furthermore, its paths are of unbounded variation if the Sobolev index is bigger or equal to $1$.
\end{remark}

This relation can be studied more extensively using a certain type of Feynman-Kac formula and results on $p$-variations of the process. \cite{Woerner07} shows convergence in probability of the normed $p$-variation \eqref{normed-pvar} under appropriate conditions on the L\'evy process. This can be interpreted as a result on the intensity of oscillations of the paths of the process. On the other hand, Feynman-Kac formulas allow us to interpret the degree of smoothness of the solution of the PIDE as an effect that directly stems from the distribution. An appropriate Feynman-Kac formula that allows us to distinguish between different degrees of smoothing is given in \cite[Theorem IV.9]{PhdGlau}.

To conclude, let us point out that the results in this article have an obvious extension to the case of time-inhomogeneous L\'evy process when one requires continuity and G{\aa}rding condition \emph{uniformly in time}. Moreover, for applications to option pricing, continuity and G{\aa}rding condition are studied for an analytical extension of the symbol to a certain domain in the complex plane in the article \cite{EberleinGlau2012}. For a more extensive study of multivariate processes, anisotropic Sobolev-Slobodeckii spaces are the appropriate spaces. Moreover, an extension of the framework to affine processes is a current research topic. This extension is not obvious, since the symbol of an affine process is affine in the state space, hence uniform bounds with respect to the Sobolev-Slobodeckii norms are not available.
 

\appendix
\section{}\label{appendixA}
The following lemma relates the behaviour of the symbol $A(u)$ for $|u|\to\infty$ with the behaviour of the L\'evy measure $F$ around the origin.
\par
Again, we use Landau's symbol $O$ to indicate the asymptotic behaviour; here we look at the behaviour of a function around the origin. 
More precisely we write $f(x) = O\big(g(x)\big)$ for $x\to 0$ if there exist positive constants $M$ and $N$ such that $\frac{|f(x)|}{|g(x)|} \le M$ for all $|x|<1/N$.

As generally assumed in Section \ref{sec-abscontF}, let $L$ be a real-valued L\'evy process that is a special semimartingale with characteristic triplet $(b,0,F)$ w.r.t. $h(x) = x$. Furthermore assume $F(\dd x) = f(x) \dd x$ for the L\'evy measure $F$ and we denote by $f_s$ the symmetric and by $f_{as}$ the antisymmetric part of the density function $f$.

\begin{lemma}\label{lem_sob_ordn} Let $0<Y<2$.
 \begin{itemize}
  \item [a)]
If $f_s(x) = O\left( \frac{1}{|x|^{1+Y}}\right)$ for $x\to 0$, then there exists a constant $C\ge 0$ with
$$
0 \le \Re\left( A^{f}(u)\right) = A^{f_s}(u) \le C\left(1+|u|^{Y}\right)\qquad \text{ for all } u \in \rr\,.
$$
\item [b)]
If $f_s(x) = \frac{C}{|x|^{1+Y}} + g(x)$ with $g(x)=O\left( \frac{1}{|x|^{1+Y-\delta}}\right)$ for $x\to 0$ with some $0<\delta$ and  $C>0$, then there exist constants $C_1>0$, $C_2\ge 0$ and $Y'\in(0,Y)$ such that
$$
\Re\left(A^{f}(u)\right) = A^{f_s}(u) \ge C_1|u|^Y - C_2\big(1+|u|^{Y'}\big) \qquad \text{for all }u \in \rr\,.
$$
\item[c)]
If $f_{as}(x) = O\left( \frac{1}{|x|^{1+Y}}\right)$ for $x\to 0$ with $0< Y$ and $Y \neq 1$,
then there exist constants $C,\,C_1\ge 0$ with
$$ 
\left|\Im\left(A^{f}(u)\right)\right| 
=
 \left|A^{f_{as}}(u)\right| 
\le
 C \left(1+ |u| + |u|^Y\right) 
\le
C_1\left(1+ |u|^{\max[1,Y]}\right)
$$
for every $u\in\rr$.
\item[d)]
Let $f_{as}(x) = O\left(\frac{1}{|x|^{1+Y}}\right)$ for $x\to 0$ with $Y\in(0,1)$ and assume $\int|x| f(x) \dd x <\infty$ i.e. the paths of the process are a.s. of finite variation.

If $L$ is a L\'evy process with characteristic triplet $(\int xF(\dd x),0,F)$ w.r.t. the truncation function $h(x) = x$,
then there exists a constant $C\ge 0$ with
$$
\Big|\Im\big(A(u)\big)\Big| \le C\left(1+|u|^Y\right)\qquad\text{for all }u\in\rr\,.
$$
\end{itemize}
\end{lemma}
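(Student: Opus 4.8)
The plan is to reduce all four estimates to the control of two oscillatory integrals against a power-law density near the origin, namely
$$A^{f_s}(u) = \int_{\rr}\bigl(1-\cos(ux)\bigr)f_s(x)\,\dd x \quad\text{and}\quad -iA^{f_{as}}(u) = \int_{\rr}\bigl(\sin(ux)-ux\bigr)f_{as}(x)\,\dd x,$$
each analysed by splitting the domain at the scale $|x|=1/|u|$ (taking $|u|>1$; the bounded range of $u$ is harmless, as every $A^{f_\bullet}$ vanishes at $u=0$ and is continuous, hence bounded on compacts). The elementary bounds I would use throughout are $1-\cos t\le\min\{t^2/2,2\}$, $|\sin t-t|\le\min\{|t|^3/6,2|t|\}$ and $|\sin t|\le\min\{|t|,1\}$. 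On $\{|x|\ge1\}$ I invoke that $F$ is a Lévy measure together with the special-semimartingale hypothesis, giving $\int_{|x|\ge1}|x|F(\dd x)<\infty$, and the comparison $|f_{as}|\le f_s$ from Lemma \ref{bem_fas-kl-fs}.

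For part a) I would split $A^{f_s}(u)$ over $\{|x|<1/|u|\}$, $\{1/|u|\le|x|<1\}$ and $\{|x|\ge1\}$. On the first, $1-\cos(ux)\le u^2x^2/2$ with $f_s(x)\le M|x|^{-1-Y}$ produces $u^2\int_0^{1/|u|}x^{1-Y}\,\dd x\sim|u|^{Y}$ (here $Y<2$ makes the integral converge); on the second, $1-\cos\le2$ and $\int_{1/|u|}^1 x^{-1-Y}\,\dd x$ is again of order $|u|^{Y}$; on the third the tail mass of $F$ gives a constant. Summing yields $A^{f_s}(u)\le C(1+|u|^{Y})$, and nonnegativity is immediate. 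For the lower bound in part b) I would isolate the exact power via the substitution $v=ux$: writing $f_s=C|x|^{-1-Y}+g$ on $\{|x|<1\}$,
$$C\int_{|x|<1}\bigl(1-\cos(ux)\bigr)|x|^{-1-Y}\,\dd x = C|u|^{Y}\!\int_{|v|<|u|}\bigl(1-\cos v\bigr)|v|^{-1-Y}\,\dd v \ge C|u|^{Y}\Bigl(K_Y-\tfrac{4}{Y}|u|^{-Y}\Bigr),$$
where $K_Y=\int_{\rr}(1-\cos v)|v|^{-1-Y}\,\dd v\in(0,\infty)$ supplies the positive constant $C_1$; the remainder $g=O(|x|^{-1-Y+\delta})$ contributes at most $C'(1+|u|^{Y'})$ with $Y'=\max\{Y-\delta,0\}<Y$ by the part-a) estimate, and the integral over $\{|x|\ge1\}$ is dropped by nonnegativity.

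The delicate cases are c) and d), and the main obstacle is that the linear term $ux$ cannot be separated from $\sin(ux)$ once $Y\ge1$, since $\int_{|x|<1}|ux|\,|f_{as}(x)|\,\dd x$ diverges; one must keep $\sin(ux)-ux$ together. For c) I would again split at $1/|u|$: on $\{|x|<1/|u|\}$ the cubic bound $|\sin(ux)-ux|\le|ux|^3/6$ gives order $|u|^{Y}$, while on $\{1/|u|\le|x|<1\}$ the bound $|\sin(ux)-ux|\le1+|u||x|$ produces a term of order $|u|^{Y}$ from the $1$ and a term $|u|\int_{1/|u|}^1 x^{-Y}\,\dd x$ of size $O(|u|)$ when $Y<1$ and $O(|u|^{Y})$ when $Y>1$. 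The boundary case $Y=1$ is excluded precisely because this last integral is $\log|u|$, spoiling the clean power. The tail $\{|x|\ge1\}$ gives $O(1+|u|)$ via $\int_{|x|\ge1}|x|\,|f_{as}|\,\dd x<\infty$, so that $|A^{f_{as}}(u)|\le C(1+|u|+|u|^{Y})\le C_1(1+|u|^{\max\{1,Y\}})$.

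Finally, for d) the finite-variation hypothesis $\int|x|f(x)\,\dd x<\infty$ makes $\int x f_{as}(x)\,\dd x$ absolutely convergent, and together with the drift choice $b=\int xF(\dd x)=\int xf_{as}(x)\,\dd x$ the linear contributions cancel in \eqref{eq-imA}:
$$\Im\bigl(A(u)\bigr)=ub+\int\bigl(\sin(ux)-ux\bigr)f_{as}(x)\,\dd x=\int\sin(ux)\,f_{as}(x)\,\dd x.$$
I would then bound this using $|\sin(ux)|\le|ux|$ on $\{|x|<1/|u|\}$ (order $|u|^{Y}$ since $Y<1$) and $|\sin(ux)|\le1$ elsewhere (order $|u|^{Y}$ on $\{1/|u|\le|x|<1\}$, bounded on $\{|x|\ge1\}$), yielding $|\Im(A(u))|\le C(1+|u|^{Y})$. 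The cancellation of the linear term is the crux: without the special drift, $\Im(A)$ would retain an irreducible term of order $|u|$, exceeding $|u|^{Y}$ for $Y<1$.
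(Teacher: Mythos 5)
Your proof is correct and, in substance, takes the same route as the paper: split each oscillatory integral into a neighbourhood of the origin, where the power-law bound on $f_s$ resp.\ $f_{as}$ applies, and its complement, controlled by the L\'evy-measure integrability $\int(|x|^2\wedge 1)F(\dd x)<\infty$ together with $\int_{|x|\ge \epsilon}|x|F(\dd x)<\infty$ (special semimartingale) and $|f_{as}|\le f_s$; the drift cancellation $\Im(A(u))=\int\sin(ux)f_{as}(x)\,\dd x$ in d) and the identification of $\int_{1/|u|}^{1}x^{-Y}\dd x$ as the source of the $Y=1$ exclusion in c) are exactly the paper's steps. Two micro-level differences: you split at the scale $|x|=1/|u|$ and estimate each piece directly, whereas the paper fixes a small $\epsilon$, substitutes $v=ux$, and splits the rescaled integral at $|v|=1$ --- the same computation in a different order; and in b) you derive the lower bound from the convergent integral $K_Y=\int_{\rr}(1-\cos v)|v|^{-1-Y}\dd v$ minus a tail of order $|u|^{-Y}$, where the paper minorizes $1-\cos$ by a quadratic on $\{|ux|\le\pi\}$ in the style of Sato's Proposition~28.3; your variant is slightly cleaner and exhibits the sharp constant $CK_Y$.

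Two details should be patched before this compiles into a complete argument. First, the hypotheses give $f_s(x)=O(|x|^{-1-Y})$ etc.\ only for $x\to 0$, i.e.\ on some $\{|x|<1/N\}$, so your outer radius must be a sufficiently small $\epsilon$ rather than $1$; the annulus $\{\epsilon\le |x|<1\}$ then contributes $O(1)$ in a), b), d) and $O(1+|u|)$ in c) by the finiteness of $F$ and of $\int_{|x|\ge\epsilon}|x|F(\dd x)$, exactly as your treatment of $\{|x|\ge1\}$. Second, in b) your choice $Y'=\max\{Y-\delta,0\}$ fails when $\delta\ge Y$: then $Y'=0\notin(0,Y)$ and part a) cannot be applied with the nonpositive exponent $Y-\delta$; replace $\delta$ by $\delta'=\min\{Y/2,\delta\}$ (as the paper does), which is legitimate since $|x|^{-1-Y+\delta}\le|x|^{-1-Y+\delta'}$ for $|x|<\epsilon<1$, and gives a remainder $O(1+|u|^{Y-\delta'})$ with $Y-\delta'\in(0,Y)$. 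Both fixes are routine and do not affect the structure of your argument.
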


\begin{proof}
Proof of a): For every $\epsilon>0$ and arbitrary  $u\in\rr$ we have
$$
A^{f_s}(u) = \int_{-\epsilon}^\epsilon \left( 1- \cos(ux)\right) f_s(x) \dd x + \int_{(-\epsilon,\epsilon)^c}\left( 1- \cos(ux)\right) f_s(x) \dd x$$
with
\begin{align*}
0 &\le \int_{(-\epsilon,\epsilon)^c}\left( 1- \cos(ux)\right) f_s(x) \dd x 
\le 2 \int_{(-\epsilon,\epsilon)^c}f_s(x) \dd x 
=: C(\epsilon)\,.
\end{align*}
If we choose $\epsilon>0$ small enough, we get
\begin{eqnarray*}
\lefteqn{ \int_{-\epsilon}^\epsilon \left( 1- \cos(ux)\right) f_s(x) \dd x}\qquad\\
&\le&
C_1(\epsilon) \int_{-\epsilon}^\epsilon \left( 1- \cos(ux)\right) \frac{1}{|x|^{1+Y}}\dd x \\
&=&
 C_1(\epsilon) |u|^Y \int_{-\epsilon |u|}^{\epsilon |u|}  \frac{ 1- \cos x}{|x|^{1+Y}} \dd x \\
&=&
2 C_1(\epsilon) |u|^Y \left( \int_0^1 \frac{1-\cos x}{|x|^{1+Y}} \dd x + \int_{1}^{\epsilon |u|} \frac{1-\cos x}{|x|^{1+Y}} \dd x \right)
\end{eqnarray*}
with a constant $C_1(\epsilon)>0$ only depending on $\epsilon$. Furthermore,
$$
\int_0^1 \frac{1-\cos x}{|x|^{1+Y}} \dd x 
\le \frac{1}{2} \int_0^1 \frac{x^2}{|x|^{1+Y}} \dd x 
= \frac{1}{2} \int_0^1 x^{1-Y} \dd x = \frac{1}{2(2-Y)} < \infty\,,
$$
since $Y<2$. The second integral is negative for $\epsilon|u|<1$, and for $1<\epsilon|u|$ we get
$$
 0\le \int_{1}^{\epsilon |u|} \frac{1-\cos x}{|x|^{1+Y}} \dd x 
\le
\int_1^{\epsilon |u|} \frac{2}{|x|^{1+Y}} \dd x 
=
\frac{2}{Y} \left( - \big(\epsilon|u|\big)^{-Y} + 1 \right) 
\le \frac{2}{Y}\,.
$$
So there exist $\epsilon>0$ and $C_2(\epsilon)\ge 0$ with
\begin{equation}\label{inteps}
\int_{-\epsilon}^\epsilon \left( 1 - \cos(ux) \right) f_s(x) \dd x \le 
C_2(\epsilon) |u|^Y\,.
\end{equation}
For an appropriate choice of $\epsilon$ we directly obtain the assertion of a).
\par
Proof of b): The first equality of the assertion is given by \eqref{eq-reA}. For every $\epsilon>0$ we have
\begin{eqnarray*}
 A^{f_s}(u) 
&=&
 \int_{-\epsilon}^\epsilon \left( 1- \cos(ux)\right) f_s(x) \dd x +\int_{(-\epsilon,\epsilon)^c}\left( 1- \cos(ux)\right) f_s(x) \dd x \,.
\end{eqnarray*}
Since $\left( 1- \cos(ux)\right)\ge 0$ this yields $A^{f_s}(u) \ge\int_{-\epsilon}^\epsilon \left( 1- \cos(ux)\right) f_s(x) \dd x$. By inserting the assumption on $f_s$, we obtain for $\epsilon$ small enough
\begin{eqnarray*}
 A^{f_s}(u) 
&\ge &
\int_{-\epsilon}^\epsilon \left( 1- \cos(ux)\right) \frac{C}{|x|^{1+Y}} \dd x + \int_{-\epsilon}^\epsilon \left( 1- \cos(ux)\right) g(x)  \dd x \\
&\ge&
C \int_{-\epsilon}^\epsilon \frac{1- \cos(ux)}{|x|^{1+Y}} \dd x - \left|\int_{-\epsilon}^\epsilon \left( 1- \cos(ux)\right) g(x)  \dd x \right| \,.
\end{eqnarray*}
For the first integral, a computation similar to \eqref{sato_hilft} in the proof of Proposition \ref{lem-blumenthal-getoor} yields
\begin{align*}
\int_{-\epsilon}^\epsilon \frac{1- \cos(ux)}{|x|^{1+Y}} \dd x 
&\ge
c'\int_{|ux|\le \pi} \frac{x^2 u^2}{|x|^{1+Y}} \dd x - C_1(\epsilon)\\
&=
c'|u|^Y \int_{|x|\le\pi} |x|^{1-Y} \dd x - C_1(\epsilon)
= C_2|u|^Y - C_1(\epsilon)
\end{align*}
with the positive constants $C_1$ and $C_2(\epsilon)$ given by $C_1(\epsilon)= 2 \int_{(-\epsilon,\epsilon)^c}\frac{1}{|x|^{1+Y}}\dd x$ and $C_2:=c'\int_{|x|\le\pi} |x|^{1-Y} \dd x$.
In order to find an upper bound for the second integral, let us assume $\epsilon<1$.
Then $|x|^{-1-Y+\delta}\le|x|^{-1-Y+\delta'}$ with $\delta':=\min\{Y/2,\delta\}$ for $|x|<\epsilon$.
Arguing along the same lines as in the proof of equation \eqref{inteps} yields since $\delta'<Y$
\begin{align*}
0\le\int_{-\epsilon}^\epsilon \frac{1-\cos(ux)}{|x|^{1+Y-\delta}} \dd x \le
\int_{-\epsilon}^\epsilon \frac{1-\cos(ux)}{|x|^{1+Y-\delta'}} \dd x\le C(\epsilon)|u|^{Y-\delta'}
\end{align*}
for some constant $C(\epsilon)>0$.
Fixing some appropriate $\epsilon>0$ we have
$$
A^{f_s}(u)  \ge C_1|u|^Y - C_2\Big( 1+ |u|^{Y-\delta'} \Big)
$$
for a strictly positive constant $C_1,\,C_2\ge0$ and $0<\delta'<Y$.
\par

Proof of c): The first equality of the assertion is given by equation \eqref{eq-reA-imA}.
For every $u\in\rr$ we have $\left|A^{f_{as}}(u)\right| \le \int \left|ux - \sin(ux)\right| \left|f_{as}(x)\right| \dd x$ and if we choose $\epsilon>0$ small enough, Lemma \ref{bem_fas-kl-fs} allows us to conclude
\begin{eqnarray*}
\lefteqn{ \int_{(-\epsilon,\epsilon)^c}\!  \left|ux - \sin(ux)\right| \left|f_{as}(x)\right| \dd x}\qquad\\
&\le&
|u| \int _{(-\epsilon,\epsilon)^c}\! |x| \left|f_{as}(x)\right| \dd x +  \int _{(-\epsilon,\epsilon)^c}\! \left|f_{as}(x)\right| \dd x \\
&\le&
 |u| \int _{(-\epsilon,\epsilon)^c} |x| \left|f_{s}(x)\right| \dd x  +  \int _{(-\epsilon,\epsilon)^c} \left|f_{s}(x)\right| \dd x \\
&=&
 |u| \int _{(-\epsilon,\epsilon)^c} |x| F(\dd x) +   \int _{(-\epsilon,\epsilon)^c}F(\dd x) \\
&=:&
C_1(\epsilon)|u| + C_2(\epsilon)
\end{eqnarray*}
with nonnegative constants $C_1(\epsilon)$ and $C_2(\epsilon)$. From the assumption on $f_{as}$ we get
\begin{eqnarray*}
\lefteqn{\int_{-\epsilon}^{\epsilon} \left| ux - \sin(ux)\right|\left|f_{as}(x)\right| \dd x}\qquad\\
&\le&
C(\epsilon) \int_{-\epsilon}^{\epsilon} \left| ux - \sin(ux)\right| \frac{1}{|x|^{1+Y}} \dd x \\
&=&
C(\epsilon) |u|^Y  \int_{-\epsilon |u|}^{\epsilon |u|} \frac{|x-\sin(x)|}{|x|^{1+Y}} \dd x \\
&=&
2C(\epsilon) |u|^Y  \Bigg( \int_0^1 \frac{x-\sin(x)}{|x|^{1+Y}} \dd x + \int_1^{\epsilon |u|}  \frac{x-\sin(x)}{|x|^{1+Y}} \dd x    \Bigg)\,,
\end{eqnarray*}
where the first integral is finite since $Y<2$.
As before, the second integral is negative for $\epsilon|u|<1$ and for $1<\epsilon|u|$ we have
\begin{eqnarray*}
 \int_1^{\epsilon |u|}  \frac{x}{|x|^{1+Y}} \dd x 
&=&
\int_1^{\epsilon |u|} x^{-Y} \dd x 
=
\frac{\epsilon^{1-Y}}{1-Y} |u|^{1-Y} - \frac{1}{1-Y}
\end{eqnarray*}
since $Y\neq 1$ and
\begin{eqnarray*}
 - \int_1^{\epsilon|u|} \frac{\sin(x)}{|x|^{1+Y}} \dd x
\le
\int_1^{\epsilon|u|} x^{-1-Y} \dd x
=
-\frac{|u|^{-Y}}{ Y \epsilon^Y} + \frac{1}{Y} 
\le C_3(\epsilon)\big(1+|u|^{-Y} \big)
\end{eqnarray*}
with some constant $C_3(\epsilon)>0$.
Combining these estimates and fixing some $\epsilon>0$, we obtain the assertion of part c).
\par
Proof of d): Since $f_{as}$ is antisymmetric and $\int |x f_{as}(x)| \dd x \le \int|x| f(x) \dd x <\infty$ by Lemma \ref{bem_fas-kl-fs}, we obtain
$$
A^{f_{as}}(u)
=
i \Im\left(A^{f}(u)\right)
=
i \int \sin(ux) f_{as}(x) \dd x - iu \int xf(x)\dd x \,.
$$
Furthermore since the drift is given by $\int x f(x)\dd x$ we have
\begin{equation*}
 \Big|\Im\big(A(u)\!\big)\!\Big|\!
=
\!\left| \int \!\sin(ux) f_{as}(x)\! \dd x\right| 
\le
\! \int_{(-\epsilon,\epsilon)^c} \!\!\big|f_{as}(x)\big| \dd x  + \!\int_{-\epsilon}^\epsilon \!\!\big| \sin(ux)\big| \big|f_{as}(x)\!\big| \dd x
\end{equation*}
hence by the assumption on $f_{as}$ we obtain
\begin{eqnarray*}
\Big|\Im\big(A(u)\big)\Big|
&\le&
F\big( (-\epsilon,\epsilon)^c \big) + C(\epsilon) \int _{-\epsilon}^\epsilon  \frac{| \sin(ux) |}{|x|^{1+Y}} \dd x \\
&\le&
C_1(\epsilon) + C(\epsilon) |u|^Y \int_{-\infty}^\infty \frac{|\sin(x)|}{|x|^{1+Y}} \dd x\\
&=&
C_1(\epsilon)+ C_2(\epsilon)|u|^Y
\end{eqnarray*}
with positive constants $C(\epsilon)$, $C_1(\epsilon)$ and $C_2(\epsilon)$. Choosing $\epsilon>0$ yields the result.
\end{proof}

\bibliographystyle{chicago}
\bibliography{literatur} 
\end{document}